

\documentclass[10pt, a4paper, journal, twocolumn]{IEEEtran}

\IEEEoverridecommandlockouts                              


\usepackage{graphicx}
\usepackage{amsmath}
\usepackage{amssymb}

\usepackage{dsfont}
\usepackage{caption}
\usepackage{subcaption}


\usepackage{amsthm}
\newtheorem{theorem}{Theorem}

\newtheorem{lemma}[theorem]{Lemma}

\newtheorem{assumption}{Assumption}

\newtheorem{example}{Example}
\newtheorem{remark}{Remark}

\usepackage{xcolor}

\addtolength{\headheight}{14pt}







\newcommand{\bal}[1] {\ensuremath{\left(\begin{array}{#1}}}
\newcommand{\ear} {\ensuremath{\end{array}\right)}}

\newcommand{\bals}[1] {\ensuremath{\left[\begin{array}{#1}}} 
\newcommand{\ears} {\ensuremath{\end{array} \right] }} 

\DeclareMathOperator{\trace}{trace}
\DeclareMathOperator{\sign}{sign}

\DeclareMathOperator{\diag}{diag}
\DeclareMathOperator{\blcdiag}{blockdiag}

\DeclareMathOperator{\sinc}{sinc}

\newcommand{\one} {\ensuremath{\mathds{1} }} 
\newcommand{\T}{\ensuremath{\top}}

\newcommand{\funcRdR}{\ensuremath{{f}}}
\newcommand{\funcRdRd}{\ensuremath{{f}}}


\DeclareMathOperator*{\bigtimes}{\raisebox{-0.3ex}{\text{\Large$\times$}}}

\let\leq\leqslant
\let\geq\geqslant
\let\emptyset\varnothing

\newcommand{\calC}{\ensuremath{\mathcal{C}}}

\newcommand{\calE}{\ensuremath{\mathcal{E}}}
\newcommand{\calF}{\ensuremath{\mathcal{F}}}
\newcommand{\calG}{\ensuremath{\mathcal{G}}}

\newcommand{\calI}{\ensuremath{\mathcal{I}}}

\newcommand{\calL}{\ensuremath{\mathcal{L}}}

\newcommand{\calR}{\ensuremath{\mathcal{R}}}
\newcommand{\calS}{\ensuremath{\mathcal{S}}}
\newcommand{\calT}{\ensuremath{\mathcal{T}}}

\newcommand{\calX}{\ensuremath{\mathcal{X}}}

\newcommand{\frakso}{\ensuremath{\mathfrak{so}}}

\newcommand{\bmat}{\begin{matrix}}
\newcommand{\emat}{\end{matrix}}
\newcommand{\bbm}{\begin{bmatrix}}
\newcommand{\ebm}{\end{bmatrix}}
\newcommand{\bpm}{\begin{pmatrix}}
\newcommand{\epm}{\end{pmatrix}}
\newcommand{\bse}{\begin{subequations}}
\newcommand{\ese}{\end{subequations}}
\newcommand{\beq}{\begin{equation}}
\newcommand{\eeq}{\end{equation}}
\newcommand{\ben}{\begin{enumerate}}
\newcommand{\een}{\end{enumerate}}
\newcommand{\beni}{\renewcommand{\labelenumi}{\roman{enumi}.}
\renewcommand{\theenumi}{\roman{enumi}}\begin{enumerate}}
\newcommand{\eeni}{\end{enumerate}\renewcommand{\labelenumi}{\arabic{enumi}.}
\renewcommand{\theenumi}{\arabic{enumi}}}
\newcommand{\bena}{\renewcommand{\labelenumi}{\alpha{enumi}.}
\renewcommand{\theenumi}{\alpha{enumi}}\begin{enumerate}}
\newcommand{\eena}{\end{enumerate}\renewcommand{\labelenumi}{\arabic{enumi}.}
\renewcommand{\theenumi}{\arabic{enumi}}}
\newcommand{\bit}{\begin{itemize}}
\newcommand{\eit}{\end{itemize}}


\newcommand{\R}{\ensuremath{\mathbb R}}

%
%


%


\title{\LARGE \bf
Finite-time attitude synchronization \\ with distributed discontinuous protocols}

\author{Jieqiang Wei, Silun Zhang, Antonio Adaldo, Johan Thunberg, Xiaoming Hu, Karl H. Johansson  
\thanks{*This work is supported by Knut and Alice Wallenberg Foundation, Swedish Research Council, and Swedish Foundation for Strategic Research.}
\thanks{J. Wei, A. Adaldo and K.H. Johansson are with the ACCESS Linnaeus Centre, School of Electrical Engineering. 
 S. Zhang and X. Hu are with School of engineering sciences. 
 KTH Royal Institute of Technology,
 SE-100 44 Stockholm, Sweden.
 {\tt\small \{jieqiang, silunz, adaldo, hu, kallej\}@kth.se}.
 Johan Thunberg is with Luxembourg Centre for Systems Biomedicine, Université du Luxembourg,6, avenue du Swing, L-4367 Belvaux.
 {\tt\small \{johan.thunberg\}@uni.lu}
}}

\begin{document}

\maketitle

\begin{abstract}\label{s:Abstract}
The finite-time attitude synchronization problem is considered in this paper, where the rotation of each rigid body is expressed using the axis-angle representation. Two discontinuous and distributed controllers using the vectorized signum function are proposed, which guarantee almost global and local convergence, respectively. Filippov solutions and non-smooth analysis techniques are adopted to handle the discontinuities. Sufficient conditions are provided to guarantee finite-time convergence and boundedness of the solutions. Simulation examples are provided to verify the performances of the control protocols designed in this paper.
\end{abstract}

\begin{IEEEkeywords}
Agents and autonomous systems, Finite-time attitude synchronization, Network Analysis and Control, Nonlinear systems
\end{IEEEkeywords}

\IEEEpeerreviewmaketitle

\section{Introduction}\label{s:Introduction}

Originally motivated by aerospace developments in the middle of the last century~\cite{Bower1964,Kowalik1970}, the rigid body attitude control problem has continued to attract attention with many applications such as aircraft attitude control \cite{Athanasopoulos2014,Tsiotras1994}, spacial grabbing technology of manipulators~\cite{ZXLi}, target surveillance by unmanned vehicles~\cite{pettersen1996position}, and camera calibration in computer vision~\cite{ma2012invitation}. Furthermore, the configuration space of rigid-body attitudes is the compact non-Euclidean manifold $SO(3)$, which poses theoretical challenges for attitude control \cite{Bhat00scl}.

Here we review some related existing work. As attitude systems evolves on $SO(3)$---a compact manifold without a boundary---there exists no continuous control law that achieves global asymptotic stability \cite{brockett83asymptotic}. Hence one has to resort to some hybrid or discontinuous approaches. In \cite{lee2015global}, exponential stability is guaranteed for the tracking problem for a single attitude. The coordination of multiple attitudes is of high interest both in academic and industrial research, e.g., \cite{dong2016attitude,sarlette2009autonomous,Thunberg2016auto}.
In \cite{li2012decentralized} the authors considered the synchronization problem of attitudes under a leader-follower architecture. In \cite{pereira2016common}, the authors provided a local result on attitude synchronization. Based on a passivity approach, \cite{ren2010distributed} proposed a consensus control protocol for multiple rigid bodies with attitudes represented by modified Rodrigues parameters. In \cite{tron2012intrinsic}, the authors provided a control protocol in discrete time that achieves almost global synchronization, but it requires  global knowledge of the graph topology.
Although there exists no continuous control law that achieves global asymptotic stability, a methodology based on the axis-angle representation obtains almost global stability for attitude synchronization under directed and switching interconnection topologies is proposed in \cite{Thunberg2014auto}. These control laws were later generalized to include various types of vector representations including the Rodrigues Parameters and Unit Quaternions \cite{Thunberg2016auto}.
Besides these agreement results, \cite{lee2012relative,WJ15ac,wu2013spacecraft} provided distributed schemes for more general formation control of attitude in space $SO(3)$.

Among all the properties of attitude synchronization schemes, the finite-time convergence is an important one, because in practice it is desired that the system reaches the target configuration within a certain time-interval; consider, for instance, satellites in space that shall face a certain direction 
as they move in their orbits, or cameras that shall reach a certain formation to quickly follow an object.
So far,  finite-time attitude control problems are studied in different settings, e.g., \cite{Du2011,Zong2016}. In \cite{Du2011}, finite-time attitude synchronization was investigated in a leader-follower architecture, namely all the followers tracking the attitude of the leader. In \cite{Zong2016}, quaternion representation was employed for finite-time attitude synchronization. Both works used continuous control protocols with high-gain.


In this paper, we shall focus on the finite-time attitude synchronization problem, based on the axis-angle representations of the rotations without a leader-follower architecture, using discontinuous control laws. Two intuitive control schemes are proposed. The first scheme employs a direction-preserving sign function to guarantee finite-time synchronization almost globally, namely, the convergence holds for almost all the initial conditions. The other scheme, motivated by binary controllers for scalar multi-agent systems, e.g., \cite{Chen2011,LiuLam2016,Cortes2006,Hui2010}, uses the component-wise sign function. Compared to the first scheme, the second one is more coarse, in the sense that only finite number of control outputs are employed, and guarantees finite-time convergence locally. Since these control schemes are discontinuous,  nonsmooth analysis is employed throughout the paper.

The structure of the paper is as follows. In Section \ref{s:Preliminaries}, we review some results for the axis-angle representation for rotations in $SO(3)$ and introduce some terminologies and notations in the context of graph theory and discontinuous dynamical systems. Section \ref{ss:basic_model} presents the problem formulation of the finite-time attitude synchronization problem. The main results of the stability analysis of the finite-time convergence are presented in Section~\ref{s:main}, where an almost global and local stability are given in Subsection~\ref{ss:main-global} and \ref{ss:main-local} respectively. Then, in Section \ref{s:conclusion}, the paper is concluded.

\textbf{Notation.} With $\R_-,\R_+, \R_{\geq 0}$ and $\R_{\leqslant 0}$ we denote the sets of negative, positive, non-negative, non-positive  real numbers, respectively. The rotation group $SO(3)=\{R\in\R^{3\times 3}: RR^\top = I, \det R= 1 \}.$ The vector space of real $n$ by $n$ skew symmetric matrices is denoted as $\frakso(3)$. $\|\cdot\|_p$ denotes the $\ell_p$-norm and  the $\ell_2$-norm is denoted simply as $\|\cdot\|$ without a subscript.



\section{Preliminaries}\label{s:Preliminaries}

In this section, we briefly review some essentials about rigid body attitudes \cite{schaub},  graph theory \cite{Bollobas98}, and give some definitions for Filippov solutions \cite{filippov1988}.

Next lemma follows from Euler's Rotation Theorem.
\begin{lemma}
The exponential map
\begin{equation}
\exp :\frakso(3)\rightarrow SO(3)
\end{equation}
is surjective.
\end{lemma}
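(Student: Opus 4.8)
The plan is to exploit the axis--angle structure that Euler's Rotation Theorem makes explicit. First I would fix the linear isomorphism between $\frakso(3)$ and $\R^3$ sending a vector $\omega$ to the skew-symmetric matrix $\hat\omega$ that acts by $\hat\omega x = \omega\times x$. Because $\hat\omega^\top=-\hat\omega$, the matrix $R=\exp(\hat\omega)$ obeys $R^\top R=\exp(-\hat\omega)\exp(\hat\omega)=I$, and since $\trace\hat\omega=0$, Jacobi's formula gives $\det R=\exp(\trace\hat\omega)=1$. Thus $\exp$ is well defined as a map into $\SO(3)$, and only surjectivity remains.

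For surjectivity, take any $R\in\SO(3)$. Euler's Rotation Theorem supplies a fixed unit axis $n$: concretely, using $\det R=1$ and $\det(I-R^\top)=\det(I-R)$, the chain $\det(R-I)=\det\bigl(R(I-R^\top)\bigr)=\det(I-R)=-\det(R-I)$ forces $\det(R-I)=0$, so $1$ is an eigenvalue with some unit eigenvector $n$. Completing $n$ to a positively oriented orthonormal basis yields $Q\in\SO(3)$ with $Q^\top R Q$ a rotation by some angle $\theta$ about the third coordinate axis. A direct computation of the matrix exponential (Rodrigues' formula) shows this rotation equals $\exp(\theta\hat e_3)$, and the equivariance identity $Q\hat v Q^\top=\widehat{Qv}$, valid for every $Q\in\SO(3)$, then gives $R=Q\exp(\theta\hat e_3)Q^\top=\exp\bigl(\theta\,Q\hat e_3 Q^\top\bigr)=\exp\bigl(\widehat{\theta Q e_3}\bigr)$. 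Hence $R$ lies in the image of $\exp$, with $\omega=\theta Q e_3$.

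The two computational ingredients---the closed form of $\exp(\theta\hat e_3)$ and the equivariance identity $Q\hat v Q^\top=\widehat{Qv}$---are routine linear algebra. The only substantive obstacle is producing the fixed axis, i.e. Euler's Rotation Theorem itself; since the lemma is stated to follow from it, I would simply invoke it and devote the proof to converting the resulting axis--angle decomposition into the exponential statement above.
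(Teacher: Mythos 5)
Your proof is correct and follows exactly the route the paper intends: the paper offers no written proof at all, only the remark that the lemma ``follows from Euler's Rotation Theorem,'' and your argument is precisely the standard elaboration of that remark (fixed axis via $\det(R-I)=0$, conjugation to a rotation about a coordinate axis, Rodrigues' formula, and the equivariance identity $Q\hat v Q^\top=\widehat{Qv}$). If anything, your version is more self-contained than the paper's, since your determinant computation actually establishes the existence of the fixed axis rather than merely citing Euler's theorem for it.
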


For any $p=[p_1,p_2,p_3]^\top\in\R^3$ and $\hat{p}\in\frakso(3)$ given as
\begin{equation}\label{e:basic:hat}
 \hat{p}:=
 \begin{pmatrix}
   0 &-p_3 &p_2\\
   p_3& 0 &-p_1\\
   -p_2& p_1 & 0
 \end{pmatrix},
\end{equation}
Rodrigues' formula is the right-hand side of
\begin{equation}\label{e:exp-map}
\exp(\hat{p}) = I_3+\frac{\sin(\theta)}{\theta}\hat{p}+\frac{1-\cos(\theta)}{\theta^2}(\hat{p})^2,
\end{equation}
where $\theta=\|p\|$ and $\exp(\hat{p})$ is the rotation matrix through an angle $\theta$ anticlockwise about the axis
$p$. For $R\in SO(3)$ where $R$ is not symmetric, we define the inverse of $\exp$ as
\begin{equation}\label{e:log-map}
\log(R) = \frac{\theta}{2\sin(\theta)}(R-R^\top),
\end{equation}
where $\theta=\arccos(\frac{\trace(R)-1}{2})$. We define $\log(I_3)$ as the zero matrix in $\R^{3\times 3}$. Note that \eqref{e:log-map} is not defined for $\theta=\pi$. The Riemannian metric for $SO(3)$ is defined as $d_R(R_1, R_2)=\frac{1}{\sqrt{2}}\|\log(R^{-1}_1R_2)\|_F$ where $\|\cdot\|_F$ is the Frobenius norm.


One important relation between $SO(3)$ and $\R^3$ is that the open ball $B_\pi(I)$ in $SO(3)$ with radius $\pi$ around the identity, which is almost the whole $SO(3)$, is diffeomorphic to the open ball $B_{\pi}(0)$ in $\R^3$ via the logarithmic and the exponential map defined in \eqref{e:log-map} and \eqref{e:exp-map}.

\medskip

An undirected \emph{graph} $\calG=(\calI,\calE)$ consists of a finite set of nodes $\calI = \{1,2,\ldots,n\}$ and a set of edges $\calE\in\calI\times\calI$ of unordered pairs of $\calI$. To each edge $(i,j)\in\calE$, we associate a weight $w_{ij}>0$. The weighted adjacency matrix $A = [a_{ij}]\in\R^{n\times n}$ is defined by $a_{ij} = a_{ji} = w_{ij}$ if $(i,j)\in\calE$ and $a_{ji} = 0$ otherwise. Note that $A = A^{\T}$ and that $a_{ii}=0$ as no self-loops are allowed. For each node $i\in\calI$, its degree $d_i$ is defined as $d_i = \sum_{j=1}^n a_{ij}$. The graph Laplacian $L$ is defined as $L = \Delta - A$ where $\Delta$ is a diagonal matrix such that $\Delta_{ii}=d_i$. As a result, $L\one = \mathbf{0}$. We denote the set of neighbors of node $i$ as $N_i=\{j\in\calI \mid w_{ij}>0 \}$.
If the edges are ordered pairs of $\calI$, the graph $\calG$ is called a \emph{directed graph}, or \emph{digraph} for short.
An edge of a digraph $\mathcal{G}$ is denoted by $(i,j)$ (with $i\neq j$) representing the tail vertex $i$ and the head
vertex $j$ of this edge.
A digraph with unit weights is completely specified by its \emph{incidence
matrix} $B\in\R^{n\times m}$, where $|\calE|=m$, with $B_{ij}$
equal to $-1$ if the $j$th edge is towards vertex
$i$, and equal to $1$ if the $j$th edge is originating from
vertex $i$, and $0$ otherwise. The incidence matrix for undirected graphs is defined by adding arbitrary orientations to the edges of the graph.
Finally, we say that a graph $\calG$ is connected if, for any two nodes $i$ and $j$, there exists a sequence of edges that connects them. In order to simplify the notation in the proofs, we set the weight $w_{ij}$ to be one. All the results in this paper hold for the general case where the $w_{ij}$'s are elements in $R_+$.


\medskip

In the remainder of this section, we discuss Filippov solutions. Let $\funcRdRd$ be a map from $\R^m$ to $\R^n$ and let $2^{\R^n}$ denote the collection of all subsets of $\R^n$. The \emph{Filippov set-valued map} of $\funcRdRd$, denoted $\calF[\funcRdRd]:\R^m\rightarrow 2^{\R^n}$, is defined as
\begin{equation*}
\calF[\funcRdRd](x) := \bigcap_{\delta>0}\bigcap_{\mu(S)=0}\overline{\mathrm{co}}\big\{ \funcRdRd(B(x,\delta)\backslash S) \big\},
\label{eqn_Filippovdef}
\end{equation*}
where $S$ is a subset of $\R^m$, $\mu$ denotes the Lebesgue measure, $B(x,\delta)$ is the ball centered at $x$ with radius $\delta$ and $\overline{\mathrm{co}}\{\calX\}$ denotes the convex closure of a set $\calX$. If $\funcRdRd$ is continuous at $x$, then $ \calF[\funcRdRd](x)$ contains only the point $\funcRdRd(x)$.


A \emph{Filippov solution} of the differential equation $\dot{x}(t)=\funcRdRd(x(t))$ on $[0,T]\subset\R$ is an absolutely continuous function $x:[0,T]\rightarrow\R^n$ that satisfies the differential inclusion
\begin{equation}\label{e:differential_inclusion}
\dot{x}(t)\in \calF[\funcRdRd](x(t))
\end{equation}
for almost all $t\in[0,T]$. A Filippov solution is \emph{maximal} if it cannot be extended forward in time, that is, if it is not the result of the truncation of another solution with a larger interval of definition. Next, we introduce invariant sets, which will play a key part further on. Since Filippov solutions are not necessarily unique, we need to specify two types of invariant sets. A set $\calR\subset\R^n$ is called \emph{weakly invariant} if, for each $x_0\in \calR$, at least one maximal solution of \eqref{e:differential_inclusion} with initial condition $x_0$ is contained in $\calR$. Similarly, $\calR\subset \R^n$ is called \emph{strongly invariant} if, for each $x_0\in \calR$, every maximal solution of \eqref{e:differential_inclusion} with initial condition $x_0$ is contained in $\calR$. For more details, see \cite{cortes2008, filippov1988}. We use the same definition of regular function as in \cite{Clarke1990optimization} and recall that any convex function is regular.


For $V:\R^n\rightarrow\R$ locally Lipschitz, the \emph{generalized gradient} $\partial V:\R^n\rightarrow 2^{\R^n}$ is defined by
\begin{equation}
\partial V(x):=\mathrm{co}\Big\{\lim_{i\rightarrow\infty} \nabla
V(x_i)\mid x_i\rightarrow x, x_i\notin S\cup \Omega_{\funcRdR} \Big\},
\end{equation}
where $\nabla$ is the gradient operator, $\Omega_{\funcRdR} \subset\R^n$ is the set of points where $V$ fails to be differentiable and $S\subset\R^n$ is a set of measure zero that can be
arbitrarily chosen to simplify the computation, since the resulting set $\partial V(x)$ is independent of the choice of $S$ \cite{Clarke1990optimization}.

Given a  set-valued map $\calT:\R^n\rightarrow 2^{\R^n}$, the \emph{set-valued Lie derivative} $\calL_{\calT}V:\R^n\rightarrow 2^{\R^n}$ of a locally Lipschitz function $V:\R^n\rightarrow \R$  with respect to $\calT$ at $x$ is defined as
\begin{equation}\label{e:set-valuedLie}
\begin{aligned}
\calL_{\calT}V(x) := & \big\{ a\in\R \mid \exists\nu\in\calT(x) \textnormal{ such that } \\
& \quad \zeta^T\nu=a,\, \forall \zeta\in \partial V(x)\big\}.
\end{aligned}
\end{equation}
If $\calT(x)$ is convex and compact $\forall x\in\R^n$, then $\calL_{\calT}V(x)$ is a compact interval in $\R$, possibly empty.



The $i$th row of a matrix $M$ is denoted as $M_{i}$.
For any matrix $M$, we denote $M\otimes I_3$ as $\hat{M}$ and $M_{i}\otimes I$ as $\hat{M}_i$. A positive definite and semidefinite (symmetric) matrix $M$ is denoted as $M> 0$ and $M\geq 0$, respectively.  The vectors $e_1,e_2,\ldots,e_n$ denote the canonical basis of $\R^n$. 
The vectors $\one_n$ and $\mathbf{0}_n$ represents a $n$-dimensional column vector with each entry being $1$ and $0$, respectively. In this paper, we define the direction-preserving signum as
\begin{equation}\label{e:sign}
\sign(w) = \begin{cases}
\frac{w}{\|w\|} & \textrm{ if } w\neq \mathbf{0},\\
0 & \textrm{ if } w=\mathbf{0},
\end{cases}
\end{equation}
for $w\in\R^k$. The component-wise signum is denoted as
\begin{equation}\label{e:signc}
\sign_c(w)=[\sign(w_1),\ldots,\sign(w_k)]^\top,
\end{equation}
where $w=[w_1,w_2,\ldots,w_k]^\top$.
Notice that for scalars, these two signum functions coincide. Furthermore, component-wise signum is coarser than direction-preserving in the sense that there is only a finite number of elements in the range for a fixed dimension $k$.

\section{Problem formulation}\label{ss:basic_model}

We consider a system of $n$ agents (rigid bodies). We denote the world frame as $\calF_w$ and the instantaneous body frame of agent $i$ as $\calF_i$ where $i\in\calI=\{1,\ldots,n\}$. Let $R_i(t)\in SO(3)$ be the attitude of $\calF_i$ relative to $\mathcal{F}_w$ at time $t$,
 and, when $R_i(t)\in B_\pi(I)$, the corresponding axis-angle representation $x_i(t)\in\R^3$ be given by
\begin{equation}\label{e:basic:log}
  \hat{x}_i(t)=\log (R_i(t)).
\end{equation}
The kinematics of $x_i$ is given by \cite{schaub}
\begin{equation}\label{e:basic_model}
\dot{x}_i=L_{x_i}\omega_i, \quad i\in\calI
\end{equation}
where $\omega_i$ is the control signal corresponding to the instantaneous angular velocity of $\calF_i$ relative to $\mathcal{F}_w$ expressed in the frame $\mathcal{F}_i$, and the transition matrix $L_{x_i}$ is defined as
\begin{align}
L_{x_i}  =& I_3+\frac{\hat{x}_i}{2}+\Bigg( 1-\frac{\sinc(\|x_i\|)}{\sinc^2(\frac{\|x_i\|}{2})} \Bigg) \Big( \frac{\hat{x}_i}{\|x_i\|} \Big)^2 \nonumber\\
 =& \frac{\sinc(\|x_i\|)}{\sinc^2(\frac{\|x_i\|}{2})} I_3+\Bigg(1-\frac{\sinc(\|x_i\|)}{\sinc^2(\frac{\|x_i\|}{2})}\Bigg) \frac{x_i x_i^\top}{\|x_i\|^2}+\frac{\hat{x}_i}{2} \nonumber\\
=:& L^1_{x_i}+\frac{\hat{x}_i}{2},
\end{align}
where $\sinc(\alpha)$ is defined as $\alpha \sinc(\alpha) = \sin(\alpha)$ for all $\alpha\neq 0$ and $\sinc(0) = 1$, see \cite{schaub}. Note that for $\|x_i\|\in [0,\pi]$, the function $\frac{\sinc(\|x_i\|)}{\sinc^2(\frac{\|x_i\|}{2})}$ is concave and belongs to $[0,1]$. Then the symmetric part of $L_{x_i}$, denoted by $L^1_{x_i}$, is positive semidefinite. More precisely, $L^1_{x_i}>0$ if $\|x_i\|\in [0,2\pi)$. Moreover, $L_{x_i}$ is Lipschitz on $B_r(0)$ for any $0<r<\pi$ (see \cite{Thunberg2014auto}).

The system \eqref{e:basic_model} can be written in a compact form as
\begin{equation}\label{e:basic_model_comp}
\dot{x}=L_{x}\omega
\end{equation}
where
\begin{equation}
\begin{aligned}
x & =[x^\top_1,\ldots,x^\top_n]^\top, \\
L_x & =\blcdiag{(L_{x_1},\ldots, L_{x_n})}, \\
\omega & =[\omega^\top_1, \ldots,\omega^\top_n]^\top.
\end{aligned}
\end{equation}

For the multi-agent system \eqref{e:basic_model_comp}, we assume that the agents can communicate with each other about the state variables $x_i$ via an undirected connected graph $\calG$. The aim is to design control protocols for $\omega$ such that the absolute rotations of all agents converge to a common rotation in the world frame $\calF_w$ in finite time, i.e.,
\begin{align}
\exists T>0, \bar{R} \in SO(3) \textnormal{ s.t. } R_i \rightarrow \bar{R} , \forall i\in\calI,  \textnormal{ as } t\rightarrow T.
\end{align}
This is equivalent to that $x$ converges to the consensus space
\begin{align}
\calC=\{x\in\R^{3n} \mid \exists \bar{x}\in\R^3 \textnormal{ such that } x=\mathds{1}_n\otimes \bar{x} \}
\end{align}
in finite time. We shall propose two distributed controllers that achieve this goal.

\section{Main result}\label{s:main}

In this section, we shall first present a control law that guarantees that the rotations of all the rigid bodies converge to a common rotation for any initial condition $R_i(0)\in B_\pi(I)\subset SO(3)$ for all $i\in\calI$. Note that this initial condition in $SO(3)$ is equivalent to $\|x_i(0)\|<\pi$ under the axis-angle representation.
In order to avoid the singularity of the logarithmic map \eqref{e:log-map}, the control law makes sure that the constraint $\|x_i(t)\|<\pi$ is met for all $i\in\calI$ and for all time $t>0$.  We consider controllers of the following form
\begin{equation}\label{e:controller1}
\omega_i=f_i~ \big(\sum_{j\in N_i}(x_j-x_i)\big), \quad i\in\calI,
\end{equation}
with maps $f_i:\R^3\rightarrow\R^3$ and the elements in the set $N_i$ are the neighbors of agent $i$. Now the closed-loop system can be written in a compact form as
\begin{equation}\label{e:closedloop1}
\dot{x} =L_{x}f\big( -\hat{L}x \big) \\
\end{equation}
where $f(y)=[f^\top_1(y_1),\ldots,f^\top_n(y_n)]^\top$, $\hat{L}=L\otimes I_3$ and $L$ is the Laplacian of the graph.
%
Our control design is based on the signum function. 
More precisely, we consider the case when some of the functions $f_i$ are $\sign$ or $\sign_c$, while the others satisfy certain continuity assumptions to be defined in the following subsections. We propose two control protocols which guarantee almost global, in the sense of  $R_i(0)\in B_\pi(I)\subset SO(3)$, and local convergence, respectively.  As discontinuities are introduced into  \eqref{e:closedloop1} by the signum functions, we shall understand the trajectories in the sense of Filippov, namely an absolutely continuous function $x(t)$ satisfying the differential inclusion
\begin{equation}\label{e:inclusion1}
\begin{aligned}
\dot{x} & \in \calF[L_{x}f\big( -\hat{L}x \big)](x) \\
& = L_{x}\calF[f\big( -\hat{L}x \big)](x) \\
& =: \calF_1(x)
\end{aligned}
\end{equation}
for almost all time, where we used Theorem 1(5) in \cite{paden1987}.

\subsection{Control law for global convergence}\label{ss:main-global}

In this subsection, we shall design a controller such that finite-time synchronization is achieved for any initial condition $R_i(0)\in B_\pi(I)\subset SO(3)$ by using the direction preserving $\sign$ defined in \eqref{e:sign}.
It might seem natural to let $f_i=\sign$ for all $i\in\calI$.
However, the following example shows that this simple controller does not guarantee $\|x_i(t)\|<\pi, \forall t>0$ for all Filippov solutions.

\begin{example}\label{ex:sliding}
Consider the system
\begin{equation*}
\begin{aligned}
\dot{x}_1 & = L_{x_1}\sign(x_2+x_3-2x_1)\\
\dot{x}_2 & = L_{x_2}\sign(x_1+x_3-2x_2)\\
\dot{x}_3 & = L_{x_3}\sign(x_2+x_1-2x_3)\\
\end{aligned}
\end{equation*}
defined on a complete graph. 
We show that for $t_0$ such that $x(t_0)\in\calC$, the trajectories can violate the constraints $\|x_i(t)\|<\pi, i\in\calI$, for some $t>t_0$.

First, by Theorem 1(1) in \cite{paden1987}, we have for any $x\in\calC$, there exists an $\varepsilon$, independent of $x$, such that the ball $B_\varepsilon(0)\subset\calF[\sign(-\hat{L}x)](x)\subset\R^9$. Second, suppose $x(t_0)=\mathds{1}\otimes\bar{x}$ for some $\bar{x}\in\R^3$. Then there exists $0<\varepsilon_1<\varepsilon$ such that the vector $\varepsilon_1\mathds{1}\otimes\frac{\bar{x}}{\|\bar{x}\|}\in B_\varepsilon(0)$. Hence,
\begin{align*}
x(t)=\mathds{1}\otimes ((t-t_0)\varepsilon_1\frac{\bar{x}}{\|\bar{x}\|}+\bar{x}), \ t\geq t_0
\end{align*}
is a Filippov solution. Indeed,
\begin{align*}
\dot{x}(t) & = \varepsilon_1\mathds{1}\otimes\frac{\bar{x}}{\|\bar{x}\|} \\
& =  L_{x(t)}\varepsilon_1\mathds{1}\otimes\frac{\bar{x}}{\|\bar{x}\|} \\
& \in L_{x(t)}\calF[\sign(-\hat{L}x)](x(t))
\end{align*}
where the second equality follows from 
\begin{align*}
L_{x(t)}\varepsilon_1\mathds{1}\otimes\frac{\bar{x}}{\|\bar{x}\|} & = L_{x(t)} x(t)\frac{\varepsilon_1}{\|\bar{x}\|+(t-t_0)\varepsilon_1} \\
& = x(t)\frac{\varepsilon_1}{\|\bar{x}\|+(t-t_0)\varepsilon_1} \\
& = \varepsilon_1\mathds{1}\otimes\frac{\bar{x}}{\|\bar{x}\|}.
\end{align*}
Here we used the fact that $L_{x(t)} x(t)=x(t)$. Then for large enough $t$, $\|x_i(t)\|$ can be larger than $\pi$. The solutions of the type $\mathds{1}\otimes \eta(t)$ with $\eta(t)$ a non-constant function is called \emph{sliding consensus}.

\end{example}

The previous example 
motivates us to consider the following assumption.

\begin{assumption}\label{ass_vectorfield_1}
For some set $\calI_c\subset\calI$, the function $f$ in~(\ref{e:closedloop1}) satisfies the following conditions:
	\begin{enumerate}
		\item[\textit{(i)}] For $i\in\calI_c$,  $f_i:\R^k\rightarrow\R^k$ is locally Lipschitz continuous and satisfies $f_i(\mathbf{0}) = \mathbf{0}$ and $f_i(y)^\top y=\|f_i(y)\|\|y\|\neq 0$ for all $y\neq \mathbf{0}$;
		\item[\textit{(ii)}] For $i\in\calI\backslash\calI_c$, the function $f_i = \sign$.
	\end{enumerate}
\end{assumption}

Note that Condition \textit{(i)} in Assumption \ref{ass_vectorfield_1} corresponds to that $f_i$ is direction preserving. 

Before showing the result for finite-time convergence, we formulate a condition for the controller \eqref{e:controller1} satisfying Assumption \ref{ass_vectorfield_1} such that the set $\{ x \mid \|x_i\|<\pi \}$ is strongly invariant for the dynamics \eqref{e:closedloop1}.

\begin{lemma}\label{lm:invariant1}
Consider the differential inclusion \eqref{e:inclusion1} satisfying Assumption \ref{ass_vectorfield_1}. If one of the following two conditions is satisfied
\begin{enumerate}
\item[\textit{(i)}] $|\calI| = 2$ and $|\calI_c| = 0$;
\item[\textit{(ii)}] $|\calI| \geq 2$ and $|\calI_c| \geq 1$,
\end{enumerate}
then the set $\calS_1(C) := \{ x\in\R^{3n}\mid \|x_i\|\leq C, i\in\calI \}$, where $C<\pi$ is a constant, is strongly invariant. This implies that $B_\pi(I)^n$ is strongly invariant.
\end{lemma}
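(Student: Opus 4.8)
The plan is to establish strong invariance through a nonsmooth Lyapunov argument built on the locally Lipschitz, regular function $V(x)=\max_{i\in\calI}\|x_i\|^2$, for which $\calS_1(C)=\{x : V(x)\le C^2\}$ is exactly a sublevel set. By the nonsmooth invariance principle \cite{cortes2008}, it suffices to prove $\max\calL_{\calF_1}V(x)\le 0$ for every $x$: this forces $t\mapsto V(x(t))$ to be non-increasing along every Filippov solution of \eqref{e:inclusion1}, rendering all sublevel sets strongly invariant. The final claim then follows because any solution with $\max_i\|x_i(0)\|=c_0<\pi$ stays inside $\calS_1(c_0)\subset\{x:\|x_i\|<\pi\}$, which is the coordinate image of $B_\pi(I)^n$.

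I would first record two elementary facts. Writing $L_{x_i}=L^1_{x_i}+\tfrac12\hat x_i$ with $L^1_{x_i}$ symmetric and $\hat x_i$ skew, the identities $L^1_{x_i}x_i=x_i$ and $\hat x_i x_i=0$ give $L_{x_i}^\top x_i=x_i$, i.e.\ $x_i^\top L_{x_i}=x_i^\top$. Hence for any $\nu=L_x\phi\in\calF_1(x)$ with $\phi\in\calF[f(-\hat Lx)](x)$, the directional derivatives collapse to $x_i^\top\nu_i=x_i^\top\phi_i$, eliminating the transition matrix from the analysis. Second, setting $y_i:=\sum_{j\in N_i}(x_j-x_i)$, for any index $i$ attaining the maximum norm the Cauchy--Schwarz bound $x_i^\top x_j\le\|x_i\|\|x_j\|\le\|x_i\|^2$ yields $x_i^\top y_i\le 0$.

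Next I would compute the set-valued Lie derivative. The generalized gradient is $\partial V(x)=\co\{2(e_ie_i^\top\otimes I_3)x : i\in\calM(x)\}$ with active set $\calM(x):=\{i:\|x_i\|=\max_j\|x_j\|\}$, so any $a\in\calL_{\calF_1}V(x)$ must satisfy $a=2x_i^\top\phi_i$ simultaneously for all $i\in\calM(x)$, for one common $\phi$. Using the stacking and composition rules of \cite{paden1987} (valid since $x\mapsto-\hat Lx$ is continuous) one has $\calF[f(-\hat Lx)](x)\subseteq\prod_i\calF[f_i](y_i)$, so each $\phi_i\in\calF[f_i](y_i)$. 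For $i\in\calI_c$, and for sign-agents with $y_i\neq\mathbf 0$, Assumption~\ref{ass_vectorfield_1} forces $\phi_i$ to be a non-negative multiple of $y_i$, whence $x_i^\top\phi_i\le 0$ by the bound above. The only mechanism producing $x_i^\top\phi_i>0$ is a sign-agent with $y_i=\mathbf 0$, where $\calF[\sign](\mathbf 0)$ is the closed unit ball.

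The crux, and the step I expect to be the main obstacle, is ruling out a strictly positive $a$, i.e.\ excluding the sliding-consensus mechanism of Example~\ref{ex:sliding}. Suppose $a>0$. The consistency requirement $a=2x_i^\top\phi_i>0$ for \emph{all} $i\in\calM(x)$ forces every active index to be a sign-agent with $y_i=\mathbf 0$, i.e.\ $x_i=\tfrac1{d_i}\sum_{j\in N_i}x_j$; since $\|x_i\|$ is maximal, equality in the triangle inequality forces $x_j=x_i$ and $\|x_j\|=\|x_i\|$ for every neighbor, so $N_i\subseteq\calM(x)$. By connectivity of $\calG$ this propagates to $\calM(x)=\calI$ with all states equal and all agents of sign type, i.e.\ $\calI_c=\emptyset$, contradicting hypothesis (ii). Under hypothesis (i), $n=2$ with both agents of sign type, the only remaining case is consensus $x_1=x_2=\bar x$, where the Filippov set reduces to the antisymmetric set $\{(v,-v):\|v\|\le 1\}$; the two active constraints then read $a=2\bar x^\top v$ and $a=-2\bar x^\top v$, forcing $\bar x^\top v=0$ and $a=0$. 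In both cases $\max\calL_{\calF_1}V(x)\le 0$, which completes the argument.
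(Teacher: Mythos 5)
Your proof is correct and follows essentially the same route as the paper's: the same Lyapunov function $V(x)=\max_{i\in\calI}\|x_i\|^2$ with its generalized gradient over the active set, the same enlargement of the Filippov map into the product of component-wise maps via \cite{paden1987}, the identity $x_i^\top L_{x_i}=x_i^\top$, the Cauchy--Schwarz/triangle-equality rigidity plus connectivity propagation, and the antisymmetry $\nu_1=-\nu_2$ of the two-agent all-sign system at consensus. The only difference is organizational --- you rule out $a>0$ by contradiction where the paper splits into the cases $x\in\calC$ and $x\notin\calC$ --- which changes nothing of substance.
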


\begin{proof}
We use a Lyapunov-like argument to prove that for any initial condition in $\calS_1(C)$, all the solutions of \eqref{e:inclusion1} will remain within the set.

Consider the Lyapunov function candidate $V(x)=\max_{i\in\calI} \|x_i\|^2$. Notice that $V$ is convex, hence regular.
Let
\begin{align}
\alpha(x) = \big\{ i\in\calI \mid \|x_i\|_2^2 = V(x) \big\}.
\label{eqn_prf_lm:invariant_alpha}
\end{align}	
The generalized gradient of $V$ is given as
\begin{equation}
\partial V(x)=\mathrm{co} \{e_i\otimes x_i \mid i\in\alpha(x) \}.
\end{equation}

Next, let $\Psi$ be defined as
\begin{equation}
\Psi = \big\{ t\geq 0 \mid \textnormal{both } \dot{x}(t) \textnormal{ and } \tfrac{d}{dt}V(x(t)) \textnormal{ exist} \big\}.
\end{equation}
Since $x$ is absolutely continuous (by definition of Filippov solutions) and $V$ is locally Lipschitz, by Lemma~1 in~\cite{Bacciotti1999} it follows that $\Psi=\R_{\geq 0}\setminus\bar{\Psi}$ for a set $\bar{\Psi}$ of measure zero and that
\begin{equation}
\frac{d}{dt}V(x(t))\in \calL_{\calF_1}V(x(t))
\end{equation}
for all $t\in\Psi$, so that the set $\calL_{\calF_1}V(x(t))$ is nonempty. For $t\in\bar{\Psi}$, we have that $\calL_{\calF_1}V(x(t))$ is empty, and hence $\max \calL_{\calF_1}V(x(t)) = -\infty < 0$ by definition. Therefore, we only consider $t\in\Psi$ in the rest of the proof.

By using Theorem 1(4) and (5) in \cite{paden1987}, the differential inclusion can be enlarged as follows
\begin{equation}\label{e:inclusion1_bigger}
\begin{aligned}
\dot{x} & \in\calF_1(x) \\
& \subset \bigtimes_{i=1}^n \calF[L_{x_i}f_i(-\hat{L}_ix)](x) \\
& =  \bigtimes_{i=1}^n L_{x_i}\calF[f_i(-\hat{L}_ix)](x) \\
& =  \bigtimes_{i=1}^n L_{x_i}\calF[f_i](-\hat{L}_ix) \\
& =: \calF_2(x),
\end{aligned}
\end{equation}
where the first equality follows from Assumption \ref{ass_vectorfield_1} and the fact that $L_{x_i}$ is continuous for $x_i$ with $\|x_i\|<2\pi$, thus we can use Theorem 1(5) in \cite{paden1987}. Moreover, we obtain that $\calL_{\calF_1}V(x(t))\subset \calL_{\calF_2}V(x(t))$ for all $t\geq 0$. For the rest of the proof, we shall show $\calL_{\calF_2}V(x(t))\subset \R_{\leq 0}$ by considering two cases.


\noindent
\textbf{Case 1: }For $x\in\calC$, i.e., $\alpha(x) = \calI$, the following two subcases can be distinguished.
\begin{enumerate}
	\item[\textit{(i)}] $|\calI| \geq 2$ and $|\calI_c| \geq 1$. There is $i\in\calI$ such that $f_i$ is locally Lipschitz and direction preserving. Then, by using the definition of the Filippov set-valued map, one can show that $\nu_i = \mathbf{0}_3$ for all $\nu=[\nu_1,\ldots,\nu_n]\in\calF_2(x)$ (recall that $x\in\calC$). As $\calL_{\calF_2}V(x(t))$ is nonempty (by considering $t\in\Psi$), there exists $a\in\calL_{\calF_2}V(x(t))$ such that $a = \zeta^\top\nu$ for all $\zeta\in\partial V(x(t))$, see the definition (\ref{e:set-valuedLie}). By choosing $\zeta = e_i\otimes x_i(t)$, it follows that $a = (e_i\otimes x_i)^\top\nu_i = 0$, which implies that $\max\calL_{\calF_2}V(x(t)) \leq 0$. 

	\item[\textit{(ii)}] $|\calI| = 2$ and $|\calI_c| = 0$. In the following, we consider the Filippov solution of system \eqref{e:inclusion1}, which can be written as
	\begin{equation}
	\begin{aligned}
	\dot{x}_1 & = L_{x_1}\frac{x_2-x_1}{\|x_2-x_1\|}, \\
	\dot{x}_2 & = L_{x_2}\frac{x_1-x_2}{\|x_1-x_2\|}.
	\end{aligned}\label{eqn_prf_lm:invariant_twoagents}
	\end{equation}
	Then it can be shown that, for $x_1 = x_2$ (i.e., $x\in\calC$), any element $\nu$ in the Filippov set-valued map of \eqref{eqn_prf_lm:invariant_twoagents} satisfies $\nu_1 = -\nu_2$. Stated differently, the following implication holds for $\nu = [\nu_1^\top,\nu_2^\top]^\top$:
	\begin{align}
	\nu\in\calF[h](x),\; x\in\calC \;\Rightarrow\; \nu_1 = -\nu_2.
	\end{align}
	Next, by recalling that $\alpha(x) = \calI$, it follows that
	\begin{align}
	\partial V(x) = \mathrm{co}\big\{e_1\otimes x_1, e_2\otimes x_2\big\}
	\end{align}
	with $x_1 = x_2$. Now, following a similar reasoning as in item \textit{(i)} on the basis of the definition of the set-valued Lie derivative in (\ref{e:set-valuedLie}), it can be concluded that $a = \zeta^\top\nu=0$, so that $\max\calL_{\calF_1}V(x(t)) = 0$ for all $x\in\calC$.
\end{enumerate}

\noindent
\textbf{Case 2:}
For $x\notin\calC$,
take an index $i\in\alpha(x)$ such that $\hat{L}_{i}x \neq \mathbf{0}_3$. Note that such $i$ indeed exists. Namely, assume in order to establish a contradiction that $\hat{L}_{i}x = \mathbf{0}_3$ for all $i\in\alpha(x)$. Then, it holds that
\begin{equation}
0 = x_i^\top\hat{L}_{i}x = \sum_{j\in N_i} x_i^\top (x_i - x_j).
\label{eqn_prf_lm:invariant_Lix}
\end{equation}
Since $\|x_i\|\geq\|x_j\|$, it follows from (\ref{eqn_prf_lm:invariant_Lix}) that $x_j = x_i$ for all $j\in N_i$. By repeating this argument and recalling that the interconnection topology is connected, it follows that $x_j = x_i$ for all $j\in\calI$, i.e., $x\in\calC$. This is a contradiction to $x\notin\calC$.

For the index $i\in\alpha(x)$ satisfying $\hat{L}_ix\neq \mathbf{0}_3$, it follows from Assumption~\ref{ass_vectorfield_1} that there exists $\gamma_i>0$ such that
\begin{align}
\calF[f_i]\big(-\hat{L}_ix\big) = -\gamma_i\hat{L}_ix,
\end{align}
i.e., for any $\nu\in\calF_2(x)$ it holds that $\nu_i = -\gamma_i L_{x_i}\hat{L}_ix$.
Note that this is a result of the direction-preserving property of either the vectorized signum function (for nonzero argument, then $\gamma_i = 1$) or the Lipschitz continuous function (by Assumption~\ref{ass_vectorfield_1}). Then, choosing $\zeta\in\partial V(x)$ as $\zeta = e_i\otimes x_i$ (recall that $i\in\alpha(x)$), it follows that for any $\nu\in\calF_2(x)$ we have
\begin{align}
\zeta^\top \nu = & -\gamma_i x^\top_i L_{x_i}\hat{L}_ix \nonumber\\
= & -\gamma_i x^\top_i \hat{L}_ix \nonumber\\
< & 0,
\end{align}
where the second equality is based on $L_{x_i}x_i=x_i$.

Summarizing the results of the two cases leads to
\begin{align}
\max\calL_{\calF_2}V(x)\leq 0
\end{align}
if $\|x_i\|<2\pi$ for all $i\in\calI$. Since the trajectory $x(t)$ is absolutely continuous, we have that if $\|x_i(0)\|\leq C<\pi$ for all $i\in\calI$, all the trajectories remain within the set $\calS_1(C)$.
\end{proof}

\begin{remark}
As indicated in Example \ref{ex:sliding}, sliding consensus can happen when $\calI_c=\emptyset$ and $|\calI|>2$. This will violate the strong invariance of the set $\calS_1(C)$ with $C<\pi$, which will introduce singularity for the axis-angle representation for rotations.
\end{remark}

Before we prove the finite-time convergence, we provide a sufficient condition for that all Filippov solutions of \eqref{e:inclusion1} converge to consensus asymptotically.

\begin{lemma}\label{lm:asymptotic1}
Under the same assumptions of Lemma \ref{lm:invariant1}, all Filippov solutions of \eqref{e:inclusion1} asymptotically converge to static consensus.
\end{lemma}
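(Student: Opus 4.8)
The plan is to establish asymptotic consensus through the nonsmooth LaSalle invariance principle for differential inclusions (see \cite{cortes2008}), using the compact invariant set furnished by Lemma \ref{lm:invariant1}. Since $\calS_1(C)$ with $C<\pi$ is strongly invariant, every Filippov solution starting in $B_\pi(I)^n$ remains in a compact set on which $L^1_{x_i}>0$ holds uniformly and $L_{x_i}$ is Lipschitz; this precompactness is exactly what the invariance principle requires. As Lyapunov function I would take the disagreement function $W(x)=\tfrac12 x^\top\hat{L}x=\tfrac12\sum_{(i,j)\in\calE}\|x_i-x_j\|^2$, which is smooth (hence regular), nonnegative, and vanishes precisely on $\calC$, with $\partial W(x)=\{\hat{L}x\}$.

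The crucial computation is the set-valued Lie derivative $\calL_{\calF_1}W$. Writing $y=-\hat{L}x$ and passing to the enlarged inclusion $\calF_2$ of \eqref{e:inclusion1_bigger}, any $\nu\in\calF_1(x)\subseteq\calF_2(x)$ has components $\nu_i=L_{x_i}\xi_i$ with $\xi_i\in\calF[f_i](y_i)$, so that $(\hat{L}x)^\top\nu=-\sum_i y_i^\top L_{x_i}\xi_i$. Here I would exploit the decomposition $L_{x_i}=L^1_{x_i}+\tfrac12\hat{x}_i$: because both $\sign$ and the maps $f_i$ of Assumption \ref{ass_vectorfield_1} are direction preserving, each $\xi_i$ is a nonnegative multiple $c_i y_i$ of $y_i$, the skew-symmetric contribution $\tfrac12 y_i^\top\hat{x}_i\xi_i$ vanishes, and each summand reduces to $c_i\,y_i^\top L^1_{x_i}y_i\geq 0$. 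Hence $\calL_{\calF_1}W(x)\subseteq\R_{\leq 0}$. Moreover, since $L^1_{x_i}>0$ for $\|x_i\|<\pi$ and since $f_i(y)^\top y=\|f_i(y)\|\|y\|\neq 0$ for $y\neq\mathbf{0}$ (respectively $\calF[\sign](y)=\{y/\|y\|\}$), every summand with $y_i\neq\mathbf{0}$ is strictly positive; thus $0\in\calL_{\calF_1}W(x)$ if and only if $y_i=\mathbf{0}$ for all $i$, i.e.\ $\hat{L}x=\mathbf{0}$, which by connectedness of $\calG$ is equivalent to $x\in\calC$.

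With these two facts the invariance principle yields that every Filippov solution converges to the largest weakly invariant set $M$ contained in $\{x\in\calS_1(C)\mid 0\in\calL_{\calF_1}W(x)\}=\calC\cap\calS_1(C)$. Since the constant map $x(t)\equiv x_0$ is a Filippov solution for each $x_0\in\calC$ (as $\mathbf{0}\in\calF[f](\mathbf{0})$ and $L_{x_0}\mathbf{0}=\mathbf{0}$), the equilibrium set coincides with $\calC$, so $\mathrm{dist}(x(t),\calC)\to 0$ and all solutions approach the consensus manifold.

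The remaining and, I expect, hardest step is to promote this set-convergence to convergence to a single \emph{static} consensus configuration, i.e.\ to exclude the sliding consensus of Example \ref{ex:sliding}. Here the hypotheses of Lemma \ref{lm:invariant1} are indispensable. Reusing the analysis of Case 1 in its proof, on $\calC$ either there is an index $i\in\calI_c$ with $\nu_i=\mathbf{0}$ for all $\nu\in\calF_2(x)$ (when $|\calI_c|\geq 1$), or $\nu_1=-\nu_2$ (when $|\calI|=2$); in both cases any trajectory that remains in $\calC$ must satisfy $\dot{x}=\mathbf{0}$ and is therefore constant. Hence $M$ contains no sliding trajectory, and the $\omega$-limit set---being connected, weakly invariant, and contained in the static equilibrium set $\calC$---reduces to static consensus. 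I would flag that pinning down uniqueness of the limit point, rather than mere convergence to the connected equilibrium set $\calC$, is the delicate issue, and it is exactly where pure $\sign$ controllers with $|\calI_c|=0$ and $|\calI|>2$ fail and where at least one Lipschitz direction-preserving agent is essential.
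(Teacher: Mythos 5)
Your proposal is correct and structurally the same as the paper's proof: both pass to the enlarged inclusion $\calF_2$ of \eqref{e:inclusion1_bigger}, show that the set-valued Lie derivative of a disagreement function is nonpositive and contains zero only on $\calC$ (using direction preservation, $L^1_{x_i}>0$, and the strong invariance from Lemma~\ref{lm:invariant1}), invoke the nonsmooth LaSalle theorem (Theorem~3 in \cite{Cortes2006}), and then rule out sliding consensus to get \emph{static} consensus. The one substantive difference is the Lyapunov function: the paper uses $V(x)=\sqrt{x^\top\hat{L}x}$, which is nonsmooth on $\calC$ and forces a separate generalized-gradient case analysis there (namely $\zeta\in\partial V(x)\Rightarrow-\zeta\in\partial V(x)$, hence $\calL_{\calF_2}V=\{0\}$ on $\calC$), whereas your smooth $W(x)=\tfrac12 x^\top\hat{L}x$ avoids that case entirely---a genuine simplification for this lemma, though the paper's square-root pays off in Theorem~\ref{thm:main1}, where it is precisely what yields the uniform negative bound needed for finite-time convergence. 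Two further remarks: first, your claim that each $\xi_i$ is a nonnegative multiple of $y_i$ fails for a $\sign$-agent at $y_i=\mathbf{0}$ (where $\calF[\sign](\mathbf{0})$ is the unit ball), but the corresponding summand $y_i^\top L_{x_i}\xi_i$ is then zero anyway, so the conclusion stands; second, your treatment of the static step is actually more complete than the paper's, which only notes $\dot{x}_i=0$ on $\calC$ for $i\in\calI_c$ (vacuous in the two-agent pure-$\sign$ case), while you also use $\nu_1=-\nu_2$ there. The issue you flag at the end---upgrading convergence to the connected equilibrium set $\calC$ into convergence to a single limit point---is glossed over in the paper's proof as well, so it is a shared limitation rather than a gap specific to your argument.
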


\begin{proof}
Similar to the proof of Lemma \ref{lm:invariant1}, we shall prove that the conclusion holds for the bigger inclusion given by \eqref{e:inclusion1_bigger}. 
In this proof cases \textit{(i)} and \textit{(ii)} can be handled with the same arguments.

Consider the Lyapunov function candidate $V(x)=\sqrt{x^\top \hat{L} x}$, which is convex, hence regular. The generalized gradient of $V$ is given as follows:
\begin{equation}
\partial V(x) = \begin{cases}
\frac{\hat{L}x}{\sqrt{x^\top \hat{L} x}} & \textrm{ if } x\notin \calC,\\
\mathrm{co}\Big\{\lim_{y\rightarrow x} \frac{\hat{L}y}{\sqrt{y^\top \hat{L} y}}: y\notin \calC \Big\} & \textrm{ if } x\in\calC.
\end{cases}
\end{equation}

Next we shall calculate the Lie derivative of $V$ by considering two cases.
\begin{enumerate}
\item[\textit{(i)}] If $x\notin\calC$, the Lie derivative is given as
\begin{equation}\label{e:lie_controller1}
\begin{aligned}
\calL_{\calF_2}V(x) & = \frac{x^\top\hat{L}}{\sqrt{x^\top \hat{L} x}} \calF_2 \\
& = \frac{\sum_{i\in\calI} x^\top\hat{L}^\top_i L_{x_i} \calF[f_i](-\hat{L}_i x) }{\sqrt{x^\top \hat{L} x}}.
\end{aligned}
\end{equation}
Here we have $(\hat{L}_ix)^\top L_{x_i} \calF[f_i](\hat{L}_ix)\geq 0$ for $i\in\calI$. Indeed, it is because: (1), the conditions that the matrix $L^1_{x_i}>0$ for $x_i$ satisfying $\|x_i\|<\pi$; (2), Assumption \ref{ass_vectorfield_1} about direction preservation, and (3), the set $\calS_1(C)$ is strongly invariant for $C<\pi$.  Moreover, if $\hat{L}_ix\neq\mathbf{0}_3$, the set $(\hat{L}_ix)^\top L_{x_i} \calF[f_i](\hat{L}_ix)\subset \R_{>0}$. Hence $\calL_{\calF_2}V\subset\R_{< 0}$.

\item[\textit{(ii)}] If $x\in\calC$, it can be seen that $\zeta\in\partial V(x)$ implies $-\zeta\in\partial V(x)$. Hence if $\calL_{\calF_2}V\neq \emptyset$, it has to be $\{0\}$. In fact, by taking $\mathbf{0}\in\calF_2$, we have that $0\in\calL_{\calF_2}V$.
\end{enumerate}

Next, by Theorem 3 in \cite{Cortes2006}, it holds that all Filippov solutions of \eqref{e:inclusion1_bigger} converge to the set $\overline{Z_{\calF_2,V}}$ asymptotically. The remaining task is to characterize the set $\overline{Z_{\calF_2,V}}$. So far we have shown that $x\notin Z_{\calF_2,V}$  $\forall x\notin \calC$, which implies that $Z_{\calF_2,V}\subset\calC$. By the fact that $\calC$ is closed, we have $\overline{Z_{\calF_2,V}}\subset \calC$. Moreover, when $x\in\calC$, $\dot{x}_i=0$ where $i\in\calI_c$, which implies that $x_i$ remains constant. In conclusion, asymptotic convergence to static consensus is guaranteed.

\end{proof}

Now we are ready for the main result of this section.

\begin{theorem}\label{thm:main1}
Assume that $R_i(0)\in B_\pi(I)\subset SO(3)$ for all $i\in \calI$ and that the graph $\calG$ is connected. Consider the multi-agent system \eqref{e:closedloop1} satisfying Assumption~\ref{ass_vectorfield_1} and the corresponding differential inclusion \eqref{e:inclusion1}. Then, all Filippov solutions converge to consensus 
in finite time if one of the following conditions holds:
	\begin{itemize}
		\item[\textit{(i)}]  $|\calI| > 2$ and $|\calI_c| = 1$;
		\item[\textit{(ii)}]  $|\calI| = 2$ and $|\calI_c| \leq 1$.
	\end{itemize}
\end{theorem}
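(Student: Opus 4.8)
The plan is to reuse the Lyapunov function $V(x)=\sqrt{x^\top \hat{L} x}$ from the proof of Lemma~\ref{lm:asymptotic1} and to upgrade the strict-decrease estimate obtained there into a \emph{uniform} one of the form $\tfrac{d}{dt}V(x(t))\leq -c$ for a constant $c>0$ whenever $x(t)\notin\calC$. Since $V$ vanishes exactly on $\calC$ and is nonnegative, such a bound forces $V$ to reach zero in finite time $T\leq V(x(0))/c$, and because the same bound also prevents any solution from leaving $\calC$, it yields static consensus. Throughout I would work with the enlarged inclusion $\calF_2$ of \eqref{e:inclusion1_bigger}, using $\calL_{\calF_1}V\subset\calL_{\calF_2}V$, and I would invoke Lemma~\ref{lm:invariant1} so that the trajectory stays in $\calS_1(C)$ with $C<\pi$; this keeps each $\|x_i\|$ bounded away from $\pi$ and hence supplies a uniform lower bound $\lambda_{\min}(L^1_{x_i})\geq\mu>0$ on the (concave, positive) smallest eigenvalue of the symmetric part $L^1_{x_i}$.

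Write $z_i:=\hat{L}_i x$. For $x\notin\calC$ every element of $\calL_{\calF_2}V(x)$ has the form $\big(\sum_i z_i^\top\nu_i\big)/V(x)$ with $\nu_i\in L_{x_i}\calF[f_i](-z_i)$. Since the skew-symmetric part $\tfrac12\hat{x}_i$ of $L_{x_i}$ satisfies $z_i^\top\hat{x}_i z_i=0$, the direction-preserving property (Assumption~\ref{ass_vectorfield_1}) gives $z_i^\top\nu_i\leq -\mu\|z_i\|$ for a sign agent $i\in\calI\setminus\calI_c$ and $z_i^\top\nu_i\leq 0$ for a Lipschitz agent $i\in\calI_c$. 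Dropping the nonpositive Lipschitz terms yields the numerator bound $\sum_i z_i^\top\nu_i\leq -\mu\sum_{i\in\calI\setminus\calI_c}\|z_i\|$.

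The crux is then to bound this sign-agent sum below by a multiple of $V$. Here I would combine two facts. First, $\sum_{i\in\calI}z_i=(\one_n^\top L\otimes I_3)x=\mathbf{0}$ because $\one_n^\top L=\mathbf{0}$; under the hypotheses either $|\calI_c|=0$ (so the sign sum is the full sum) or $|\calI_c|=1$, in which case the single Lipschitz block obeys $\|z_i\|\leq\sum_{j\in\calI\setminus\calI_c}\|z_j\|$, whence $\sum_{i\in\calI\setminus\calI_c}\|z_i\|\geq\tfrac12\sum_{i\in\calI}\|z_i\|$. This is exactly where the restriction $|\calI_c|\leq 1$ is used: with two or more Lipschitz agents their blocks could carry most of the disagreement while cancelling in the sum, so the sign contribution could fail to dominate $V$ near $\calC$. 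Second, since $\hat{L}x$ lies in the range of $\hat{L}$, a spectral estimate gives $\sum_i\|z_i\|\geq\|\hat{L}x\|=\sqrt{x^\top\hat{L}^2 x}\geq\sqrt{\lambda_2}\,V(x)$, where $\lambda_2>0$ is the algebraic connectivity of the connected graph. Combining these, $\max\calL_{\calF_2}V(x)\leq -\tfrac12\mu\sqrt{\lambda_2}=:-c<0$ for all $x\notin\calC$ in $\calS_1(C)$.

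Finally I would convert this into finite-time convergence. As $x(\cdot)$ is absolutely continuous and $V$ is locally Lipschitz, $\tfrac{d}{dt}V(x(t))\in\calL_{\calF_1}V(x(t))\subset\calL_{\calF_2}V(x(t))$ for a.e.\ $t$, so $\tfrac{d}{dt}V(x(t))\leq -c$ whenever $V(x(t))>0$. Integrating this differential inequality gives $V(x(t))\leq\max\{V(x(0))-ct,0\}$, so the trajectory reaches $\calC$ no later than $T=V(x(0))/c$; the same inequality rules out any increase of $V$ away from $0$, so the solution stays at the attained consensus point and the consensus is static. I expect the main obstacle to be the third step, namely securing the lower bound $\sum_{i\in\calI\setminus\calI_c}\|z_i\|\geq c'V(x)$ with $c'>0$, because this is the single place where the asymptotic decrease of Lemma~\ref{lm:asymptotic1} is promoted to a constant rate and where the hypotheses on $|\calI|$ and $|\calI_c|$ are genuinely required.
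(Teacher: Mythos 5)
Your proposal is correct and follows essentially the same route as the paper's own proof: the same Lyapunov function $V(x)=\sqrt{x^\top\hat{L}x}$, the same enlarged inclusion $\calF_2$, the uniform positive-definiteness bound on the symmetric part of $L_{x_i}$ obtained from the strong invariance of $\calS_1(C)$ (Lemma~\ref{lm:invariant1}), the identity $\sum_{i\in\calI}\hat{L}_ix=\mathbf{0}$ to absorb the single Lipschitz agent at the cost of a factor $\tfrac12$, and the spectral bound via $\lambda_2$ to turn $\sum_i\|\hat{L}_ix\|$ into a multiple of $V$, yielding a uniform negative bound on $\calL_{\calF_2}V$ off $\calC$. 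The only differences are cosmetic --- you treat cases \textit{(i)} and \textit{(ii)} in one unified argument and integrate the differential inequality directly where the paper cites Proposition~4 of \cite{Cortes2006} together with Lemma~\ref{lm:asymptotic1}; just be aware that your final claim of \emph{static} consensus does not follow from the non-increase of $V$ alone (sliding consensus also has $V\equiv 0$) but from the pinning argument in Lemma~\ref{lm:asymptotic1}, which the theorem statement itself does not actually require.
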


\begin{proof}
The proof is separated into two parts, one for each of the conditions.

\textit{(i)} Without loss of generality, we assume that $f_1$ satisfies the condition $\textit{(i)}$ in Assumption \ref{ass_vectorfield_1} while $f_2, \ldots,f_n$ are $\sign$. Similar to Lemma \ref{lm:invariant1}, instead of proving the conclusion for the differential inclusion \eqref{e:inclusion1}, we shall show that it holds for the bigger inclusion given by \eqref{e:inclusion1_bigger}.

Consider the Lyapunov function candidate $V(x)=\sqrt{x^\top \hat{L} x}$. We shall show that there exists $c$ such that $\max\calL_{\calF_2}V<c<0$ for any initial condition $\calS_1(C)\setminus \calC$ with $C<\pi$.

In the proof of Lemma \ref{lm:asymptotic1}, we have shown that for $x\notin \calC$, the Lie derivative is given by \eqref{e:lie_controller1}. By the fact that $L^1_{x_i}>0$ with $\|x_i\|<\pi$ and $f_i$ is direction preserving, we have
\begin{equation}
\calL_{\calF_2}V \leq \frac{\sum_{i=2}^n x^\top\hat{L}^\top_i L_{x_i} \calF[\sign](-\hat{L}_i x) }{\sqrt{x^\top \hat{L} x}}.
\end{equation}
Furthermore, the Filippov set-valued map
\begin{equation}
\calF[\sign](-\hat{L}_i x) =
\begin{cases}
\{\frac{-\hat{L}_i x}{\|\hat{L}_i x\|} \} & \textrm{ if } \|\hat{L}_i x\|\neq 0,\\
\{ v\mid \|v\|\leq 1 \} & \textrm{ if } \|\hat{L}_i x\|= 0,
\end{cases}
\end{equation}
which implies that
\begin{equation}
\begin{aligned}
& x^\top\hat{L}^\top_i L_{x_i}\calF[\sign](-\hat{L}_i x) \\
= &
\begin{cases}
\{\frac{-x^\top\hat{L}^\top_i L_{x_i}\hat{L}_i x}{\|\hat{L}_i x\|} \} & \textrm{ if } \|\hat{L}_i x\|\neq 0,\\
\{\|\hat{L}_i x\| \} & \textrm{ if } \|\hat{L}_i x\|= 0.
\end{cases}
\end{aligned}
\end{equation}
Note that, for any $x$ satisfying $\|x_i\|<\pi$ for all $i\in\calI$, there exists $c_1\in(0,1)$, which only depends on $\max_i \|x_i(0)\|$, such that $L_{x_i}-c_1I\geq 0$ for all $i\in\calI$. This implies that
\begin{equation}
x^\top\hat{L}^\top_i L_{x_i}\calF[\sign](-\hat{L}_i x)\subset (-\infty, -c_1\|\hat{L}_i x\|].
\end{equation}

So far we have shown that, for any $a\in\calL_{\calF_2}V(x)$, it holds that
\begin{equation}\label{eqn_prf_tm:main_vector_suff_abound_step1}
a\leq -c_1\frac{\sum_{i=2}^n \|\hat{L}_i x \|}{\sqrt{x^\top \hat{L} x}}.
\end{equation}
Furthermore, by using that $\hat{L}_1x=-\sum_{i=2}^n \hat{L}_i x$, which is based on the connectivity of the graph $\calG$, we have
\begin{align}
\|\hat{L}_1x\| = \left\| \sum_{i=2}^n \hat{L}_ix \right\| \leq \sum_{i=2}^n \big\| \hat{L}_ix \big\|,
\label{eqn_prf_tm:main_vector_suff_L1bound}
\end{align}
where the triangle inequality is used. Then, the use of \eqref{eqn_prf_tm:main_vector_suff_L1bound} in \eqref{eqn_prf_tm:main_vector_suff_abound_step1} yields
\begin{align}
a \leq -\frac{c_1}{2}\frac{1}{\sqrt{x^\top\hat{L}x}} \left( \sum_{i=1}^n \|L_ix\| \right).
\label{eqn_prf_tm:main_vector_suff_abound_step2}
\end{align}
By exploiting the observation that $L$ is a graph Laplacian, it holds that
\begin{align}
L = U^\top\Lambda U, \quad L^\top L = U^\top\Lambda^2 U,
\label{eqn_prf_tm:main_vector_suff_Ldecomposition}
\end{align}
where $\Lambda = \diag\{0,\lambda_2,\ldots,\lambda_n\}$ is a diagonal matrix with real-valued eigenvalues satisfying $0<\lambda_2$ and $\lambda_{j}\leq\lambda_{j+1}$ for $j=2,\ldots,n$. The matrix $U$ collects the corresponding right-eigenvectors. From (\ref{eqn_prf_tm:main_vector_suff_Ldecomposition}), it can be seen that
\begin{align}
L^\top L - cL \geq 0
\end{align}
for any $c\in[0,\lambda_2]$. Consequently, using $\hat{L} = L\otimes I$, it follows that
\begin{align}
\left( \sum_{i=1}^n \|\hat{L}_ix\| \right)^2 = x^\top \hat{L}^\top \hat{L}x \geq c x^\top \hat{L}x.
\label{eqn_prf_tm:main_vector_suff_Lbound}
\end{align}
After taking the square root (note that $x^\top L x>0$ for all $x\notin\calC$) in (\ref{eqn_prf_tm:main_vector_suff_Lbound}) and substituting the result in (\ref{eqn_prf_tm:main_vector_suff_abound_step2}), the result
\begin{align}
a \leq -\frac{c_1\sqrt{c}}{2}\frac{\sqrt{x^\top\hat{L}x}}{\sqrt{x^\top\hat{L}x}} = -\frac{c_1\sqrt{c}}{2}
\end{align}
follows, which proves finite-time convergence to consensus by Proposition~4 in \cite{Cortes2006} and Lemma~\ref{lm:asymptotic1}.

\textit{(ii)} By using a similar reasoning, we have that for any $a\in \calL_{\calF[\hat{h}]}V(x)$, it satisfies that  $a\leq -c_1$ where $c_1$ satisfying $L_{x_i}-c_1I\geq 0$ for all $i\in\calI$. This again implies finite time convergence.

\end{proof}

\begin{remark}\label{re:conjecture}
Theorem~\ref{thm:main1} provides sufficient conditions for finite-time convergence of the protocol \eqref{e:controller1} satisfying Assumption \ref{ass_vectorfield_1}. However, we conjecture these sufficient conditions to be necessary as well. Namely, for the case $|\calI|>2$, we expect that all the Filippov solutions of \eqref{e:inclusion1} converge to consensus in finite time if and only if $|\calI_c|=1$; and for the case $|\calI|=2$, we expect that the finite-time synchronization is achieved if and only if $|\calI_c|\leq 1$. We show that the latter statement holds according to the following argument.

If $|\calI|=2$ and $|\calI_c|>1$, then $\calI=\calI_c$. In this case, we can only have asymptotic convergence if $\calI=\calI_c$. Indeed, the right-hand side of \eqref{e:closedloop1} is Lipschitz; therefore, finite-time convergence to an equilibrium can not occur. 

Unfortunately, for the case $|\calI|>2$, we can not prove the necessity, which leaves it as an open question.
\end{remark}

We close this subsection by demonstrating the result in Theorem~\ref{thm:main1} and conjecture in Remark \ref{re:conjecture} by an example.

\begin{example}\label{ex:global}

Consider the three-agent system
\begin{align}\label{e:ex_FTC1}
\dot{x}_1 & = L_{x_1}(x_2-x_1) \nonumber\\
\dot{x}_2 & = L_{x_2}\sign(x_1+x_3-2x_2)\\
\dot{x}_3 & = L_{x_3}\sign(x_2-x_3),\nonumber
\end{align}
defined on a line graph with. Notice that this system meets condition
\textit{(i)} in Theorem \ref{thm:main1}.  A phase portrait and trajectory of this system are depicted in Fig.~\ref{fig:ex_global}. There, we can see that finite-time consensus is achieved.

\begin{figure}[t!]
\centering
\begin{subfigure}[t]{0.4\textwidth}
\includegraphics[width=1\textwidth]{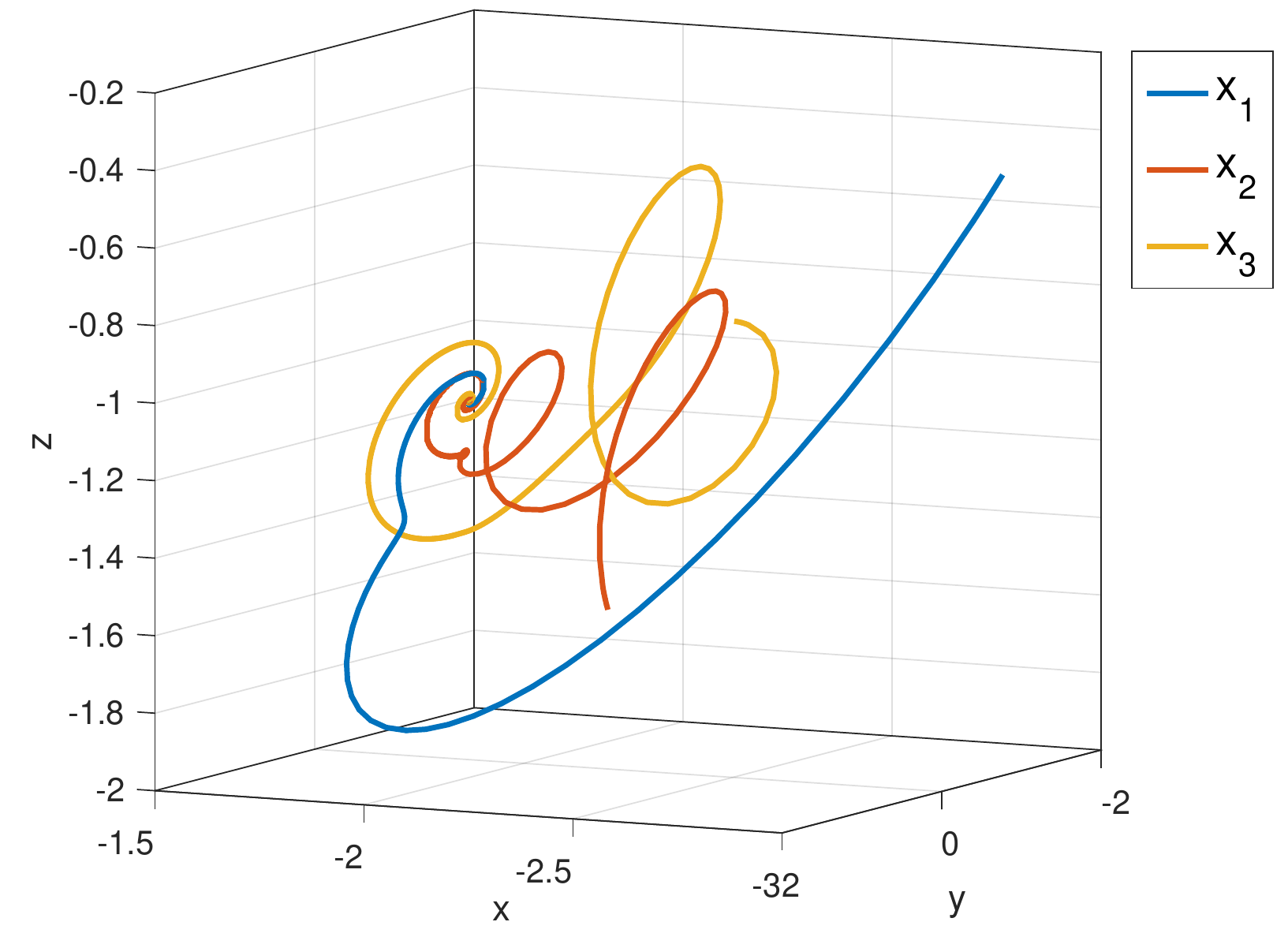}
\caption{Phase portraits of $x_i,i=1,2,3$ in $\R^3$.}\label{fig:ex_FTC_component_sign_div1}
\end{subfigure}
~
\begin{subfigure}[t]{0.4\textwidth}
\includegraphics[width=1\textwidth]{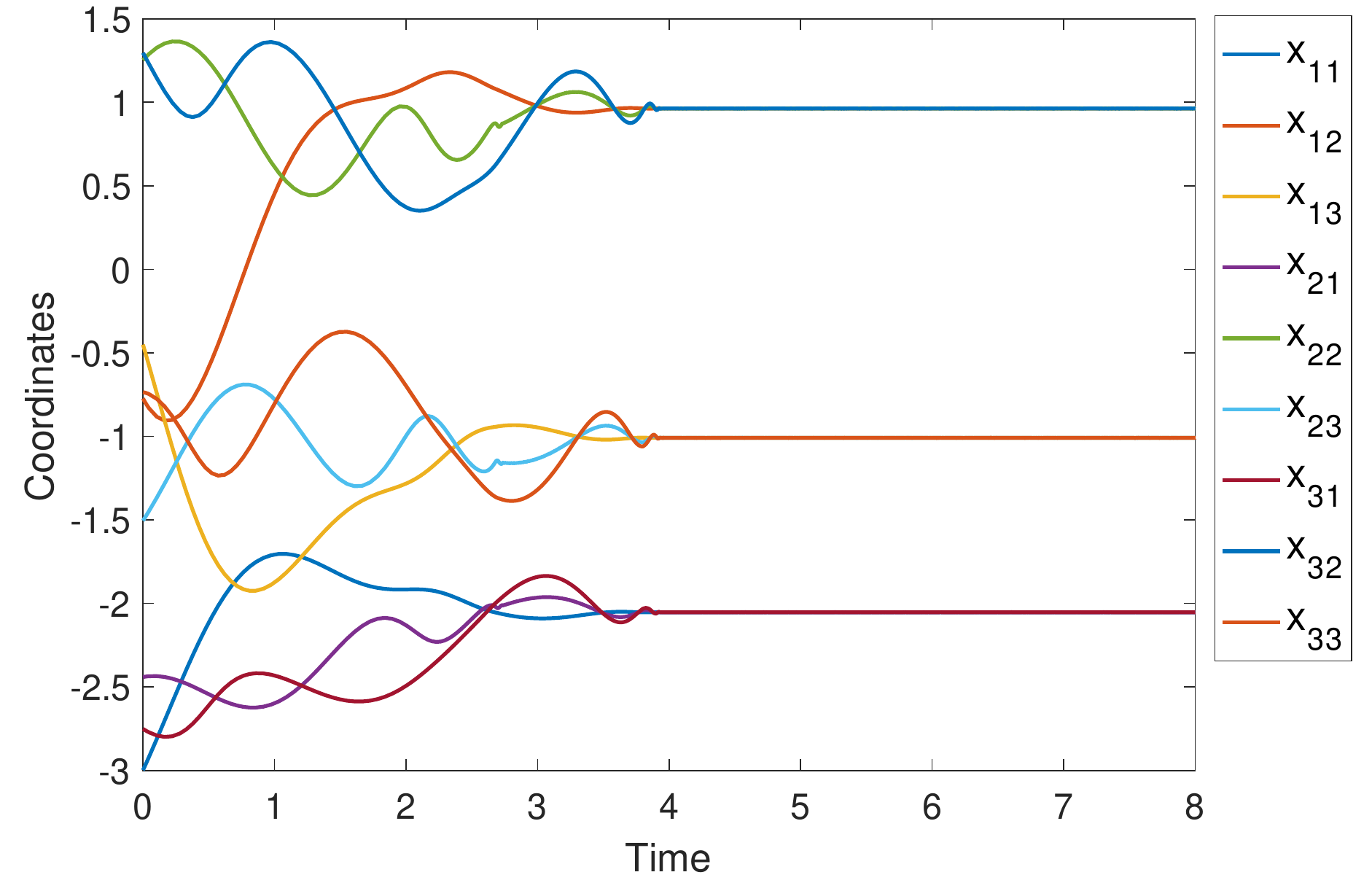}
\caption{Evolution of the three coordinates of $x_i,i=1,2,3$. Finite-time consensus is achieved.}\label{fig:ex_FTC_component_sign_div2}
\end{subfigure}
\caption{The simulation of Example \ref{ex:global}.}\label{fig:ex_global}
\end{figure}

Next, modify the system to
\begin{align}\label{e:ex_AC1}
\dot{x}_1 & = L_{x_1}(x_2-x_1) \nonumber\\
\dot{x}_2 & = L_{x_2}\sign(x_1+x_3-2x_2)\\
\dot{x}_3 & = L_{x_3}(x_2-x_3). \nonumber
\end{align}
Notice that $\calI_c=\{1,3\}$, hence the conditions \textit{(i)} and \textit{(ii)} in Theorem \ref{thm:main1} are not satisfied. As stated in Remark \ref{re:conjecture}, we expect there are some trajectories that only converge to consensus asymptotically, but not in finite time. In fact, we construct such a solution as follows.
For the initial condition satisfying $x_1(0)+x_3(0)=x_2(0)=\mathbf{0}$, the trajectory $x_1(t)=x_1(0)e^{-t}, x_2(t)=\mathbf{0}, x_3(t)=x_3(0)e^{-t}$ is a Filippov solution of \eqref{e:ex_AC1}. Indeed, the trajectory obeys the dynamic
\begin{equation*}
\begin{pmatrix}
\dot{x}_1 \\ \dot{x}_2 \\ \dot{x}_3
\end{pmatrix}
 = \begin{pmatrix}
-x_1 \\ \mathbf{0} \\ -x_3
\end{pmatrix}
\in
\calF \begin{bmatrix}
\begin{pmatrix}
L_{x_1}(x_2-x_1) \\ L_{x_2}\sign(x_1+x_3-2x_2) \\ L_{x_3}(x_2-x_3)
\end{pmatrix}
\end{bmatrix}(x)
\end{equation*}
where we used  that $L_{x_2}=I, L_{x_i}x_i=x_i$, and $\mathbf{0}\in\calF[\sign](\mathbf{0})$. Hence we have a trajectory converging to consensus only asymptotically.
\end{example}

\subsection{Control law for local convergence}\label{ss:main-local}
%

In this subsection, we consider another type of controller to achieve finite-time synchronization. The controller has a finite number of control actions.
However, differently than controller \eqref{e:controller1}, 
the controller in this subsection only guarantees local convergence.

We consider the discontinuous control protocol
\begin{equation}\label{e:controller2}
\omega_i = \sum_{j\in N_i} \sign_c(x_j-x_i)
\end{equation}
where $\sign_c$ is defined  in \eqref{e:signc}. Notice that each $\omega_i$ only takes a finite number of values.
Now the closed-loop dynamic is
\begin{equation}\label{e:system_ab}
\dot{x}_i = L_{x_i}\sum_{j\in N_i} \sign_c(x_j-x_i).
\end{equation}
The compact version of the system \eqref{e:system_ab} can be written as
\begin{equation}\label{e:system_ab_compact}
\dot{x} = - L_x \hat{B} \sign_c\big(\hat{B}^\T x\big),
\end{equation}
where $B$ is the incidence matrix of the underlying graph and $\hat{B}=B\otimes I_3$.
Similar to the previous subsection, we understand the solution of \eqref{e:system_ab_compact} in the sense of Filippov, namely solutions of the following differential inclusion:
\begin{equation}\label{e:system_ab_compact_fili}
\begin{aligned}
\dot{x} &\in \calF[ - L_x \hat{B} \sign_c\big(\hat{B}^\T x \big) ](x) \\
& = - L_x \hat{B} \calF[\sign_c(\hat{B}^\T x)] (x)\\
& =: \calF_3(x),
\end{aligned}
\end{equation}
where the first equality is based on  Theorem 1(5) in \cite{paden1987} and the fact that $L_{x_i}$ is continuous for $\|x_i\|\in [0,\pi)$.
By using Theorem 1 \cite{paden1987}, we can enlarge the differential inclusion $\calF_3$ as follows:
\begin{align}\label{e:system_ab_compact_fili_bigger}
\calF_3(x) & \subset - L_x \hat{B} \bigtimes_{i=1}^{3n} \calF[\sign_c]((\hat{B}^\top)_i x )\nonumber \\
& =: \calF_4(x),
\end{align}
where $(\hat{B}^\top)_i$ is the $i$th row of $\hat{B}^\T$ and the set-valued function $\calF[\sign_c]$ is defined as
\begin{equation}
\calF[\sign_c](x) =
\begin{cases}
1 & \textrm{ if } x>0,\\
[-1,1] & \textrm{ if } x=0, \\
-1 & \textrm{ if } x<0.
\end{cases}
\end{equation}

Before we show the main result, we present a compact strongly invariant set.

\begin{lemma}\label{l: invariant_set_control2}
The set $\calS_2(C) = \{ x\in\R^{3n}| \sum_{i=1}^{n}\|x_i\|_2^2< C \}$ with $C< 4\pi^2$ is strongly invariant for the differential inclusion \eqref{e:system_ab_compact_fili}. Moreover, all the solutions of system \eqref{e:system_ab_compact_fili} converge to consensus asymptotically.
\end{lemma}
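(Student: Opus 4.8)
The plan is to use the smooth, radially unbounded Lyapunov candidate $W(x)=\sum_{i=1}^n\|x_i\|^2=x^\top x$, whose sublevel sets are exactly the sets $\calS_2(\cdot)$, so that $\nabla W(x)=2x$ and $\partial W(x)=\{2x\}$. First I would record the two structural facts that hold throughout $\calS_2(C)$ when $C<4\pi^2$: since $\sum_i\|x_i\|^2<C<4\pi^2$ forces $\|x_i\|<2\pi$ for every $i$, each $L_{x_i}$ is well defined with $L^1_{x_i}>0$, and the identity $L_{x_i}x_i=x_i$ holds. As $L^1_{x_i}$ is symmetric this yields $x_i^\top L_{x_i}=x_i^\top L^1_{x_i}+\tfrac12 x_i^\top\hat{x}_i=x_i^\top$, i.e. $x^\top L_x=x^\top$, which is the identity that makes the Lie derivative collapse to something explicit.

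For strong invariance I would work with the larger inclusion $\calF_4$ of \eqref{e:system_ab_compact_fili_bigger}: any $\nu\in\calF_4(x)$ is $\nu=-L_x\hat{B}g$ with $g_k\in\calF[\sign_c]\big((\hat{B}^\top x)_k\big)$. Using $x^\top L_x=x^\top$,
\begin{equation*}
\nabla W(x)^\top\nu = -2x^\top\hat{B}g = -2\sum_k (\hat{B}^\top x)_k\, g_k = -2\|\hat{B}^\top x\|_1 \leq 0,
\end{equation*}
because $(\hat{B}^\top x)_k g_k=|(\hat{B}^\top x)_k|$ for every $k$ by the definition of $\calF[\sign_c]$ (the product vanishes whenever $(\hat{B}^\top x)_k=0$). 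Hence $\calL_{\calF_4}W(x)=\{-2\|\hat{B}^\top x\|_1\}\subset\R_{\leq0}$, and a fortiori $\calL_{\calF_3}W\subset\R_{\leq0}$. By Lemma~1 in \cite{Bacciotti1999}, $\tfrac{d}{dt}W(x(t))\leq0$ for almost every $t$, so $W$ is non-increasing along every Filippov solution; a trajectory starting in the open set $\{W<C\}$ can then never reach its boundary, which gives strong invariance of $\calS_2(C)$.

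For the asymptotic statement, strong invariance confines each solution to a bounded set, so I would invoke the nonsmooth LaSalle invariance principle (Theorem~3 in \cite{Cortes2006}) with the same $W$: every solution converges to the largest weakly invariant set inside $\{x:0\in\calL_{\calF_4}W(x)\}=\{x:\hat{B}^\top x=0\}$, which equals the consensus space $\calC$ since connectivity of $\calG$ gives $\ker\hat{B}^\top=\calC$.

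The hard part will be the final step: showing that the largest weakly invariant subset of $\calC$ contains only static consensus points, i.e. that the edge-based $\sign_c$ protocol admits no sliding consensus (unlike Example~\ref{ex:sliding}). I would take a solution with $x(t)=\one\otimes\eta(t)\in\calC$; there $L_x=I_n\otimes L_\eta$ with $L_\eta$ invertible (its symmetric part $L^1_\eta$ is positive definite), and $\hat{B}^\top x=0$, so the inclusion forces $\one\otimes\dot\eta=-(B\otimes L_\eta)g$ for some selection $g(t)\in[-1,1]^{3m}$. Writing $g$ in per-edge blocks $g^{(e)}\in\R^3$, the node-$i$ equation is $\dot\eta=-L_\eta\sum_e B_{ie}g^{(e)}$, whence $\sum_e B_{ie}g^{(e)}=-L_\eta^{-1}\dot\eta$ is independent of $i$; summing over $i$ and using that every column of the incidence matrix $B$ sums to zero gives $-nL_\eta^{-1}\dot\eta=0$, hence $\dot\eta=0$. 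This incidence-matrix cancellation, which is exactly what the node-based direction-preserving controller lacks, rules out sliding motion, so the largest weakly invariant subset of $\calC$ is the set of equilibria and all solutions converge to static consensus.
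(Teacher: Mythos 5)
Your proof of the lemma as stated is correct and takes essentially the same route as the paper: the same Lyapunov function (up to a factor $1/2$), the same enlargement $\calF_3\subset\calF_4$, the same key identity $x^\top L_x = x^\top$ that collapses the Lie derivative to the singleton $\{-2\|\hat{B}^\top x\|_1\}$ (the paper writes it as the edge sum $-\sum_{(i,j)\in\calE}(x_i-x_j)^\top\bigtimes_{k=1}^3\calF[\sign_c](x_{i_k}-x_{j_k})\subset\R_{\leq 0}$), and the same appeal to Theorem~3 of \cite{Cortes2006} together with $\{x\mid 0\in\calL_{\calF_4}V(x)\}=\ker\hat{B}^\top=\calC$ by connectivity.

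The only divergence is your final ``hard part'', which the lemma does not actually require: the statement claims convergence to consensus, i.e.\ to the set $\calC$, and the paper's proof stops exactly where your LaSalle step ends. Your incidence-matrix cancellation (columns of $B$ sum to zero, hence any solution lying in $\calC$ has $\dot{\eta}=0$) is correct, and it is a nice explanation of why this edge-based protocol cannot exhibit the sliding consensus of Example~\ref{ex:sliding}; but it does not justify your last inference. First, every point of $\calC$ is an equilibrium of \eqref{e:system_ab_compact_fili} (take the zero selection in $\calF[\sign_c](\mathbf{0})$), so the largest weakly invariant subset of $\calC$ is all of $\calC$; what your argument really shows is the sharper fact that solutions \emph{remaining} in $\calC$ are constant, not that the invariant set is smaller. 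Second, LaSalle yields only $\dis(x(t),\calC)\rightarrow 0$, and a solution approaching $\calC$ need never enter $\calC$, so ruling out sliding motion inside $\calC$ does not upgrade set convergence to convergence to a single static consensus point. If you want static consensus, a further argument is needed; in the paper that stronger conclusion is obtained only through the finite-time convergence theorem that follows the lemma.
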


\begin{proof}
Consider the Lyapunov function candidate $V(x)=\frac{1}{2}x^\top x=\frac{1}{2}\sum_{i=1}^{n}x^\top_i x_i$. We shall show that the conclusion holds for the bigger inclusion $\calF_4$ defined in \eqref{e:system_ab_compact_fili_bigger}.
	
Since $V$ is smooth, the set-valued Lie-derivative $\calL_{\calF_4}V(x)$ is given as
	\begin{equation}
	\begin{aligned}
	\calL_{\calF_4} V(x) & = x^\top \calF_4(x) \\
	& = -x^\top \hat{B} \bigtimes_{i=1}^{3n} \calF[\sign_c]((\hat{B}^\top)_i x ),
	\end{aligned}
	\end{equation}
	where the last equality is implied by that $L_{x_i}$ is well-defined when $\|x_i\|<2\pi$, which is satisfied by the elements in $\calS_2(C)$, and $x^\top_iL_{x_i}=x^\top_i$. Furthermore, note that
	\begin{equation}\label{e:controller2_invariant_proof}
	\begin{aligned}
	& -x^\top \hat{B} \bigtimes_{i=1}^{3n} \calF[\sign_c]((\hat{B}^\top)_i x ) \\ & =  - \sum_{(i,j)\in\calE} (x_i-x_j)^T \bigtimes_{k=1}^{3}\calF[\sign_c](x_{i_k}-x_{j_k}) \\
	& \subset  \R_{\leq 0},
	\end{aligned}
	\end{equation}
	which indicates that $V(x(t))$ is not increasing along the trajectories when $C<4\pi^2$. Hence the set $\calS_2(C)$ is strongly invariant. Notice that the boundedness of the trajectories is also guaranteed.
	
	Finally, by Theorem 3 in \cite{Cortes2006}, we have that the Filippov solution of system \eqref{e:system_ab_compact_fili} will asymptotically converge to the set
	\begin{equation}
	\Omega  = \overline{\big\{ x\in\R^{3n} \;\big|\; 0\in\calL_{\calF_4}V(x) \big\}}.
	\end{equation}
	By \eqref{e:controller2_invariant_proof} it is straightforward to verify that $\Omega=\calC$. Then the conclusion follows.
\end{proof}

From the previous lemma, we note that the continuity assumption, i.e., Assumption \ref{ass_vectorfield_1} \textit{(i)}, is not needed for controller \eqref{e:controller2}. However, the control law \eqref{e:controller2} can only guarantee local convergence as indicated in the following Theorem and the complete proof can be found in \cite{Wei2017}.

\begin{theorem}
	Assume that the initial rotations of the agents satisfy $\sum_{i=1}^nd_R^2(I,R_i(0))<\pi^2$ and the underlying graph $\calG$ is connected. Consider the multi-agent system \eqref{e:system_ab_compact} and the corresponding differential inclusion \eqref{e:system_ab_compact_fili}. Then, attitude synchronization is achieved in finite time.
\end{theorem}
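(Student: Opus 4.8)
The plan is to reduce the problem to the strongly invariant set supplied by Lemma~\ref{l: invariant_set_control2} and then run a finite-time Lyapunov argument in the spirit of the proof of Theorem~\ref{thm:main1}, but with an $\ell_1$-type disagreement function tailored to the component-wise signum. First I would translate the hypothesis into axis-angle coordinates. Since $\hat{x}_i=\log R_i$ and $\|\hat{p}\|_F^2=2\|p\|^2$ for the hat map in \eqref{e:basic:hat}, one has $d_R(I,R_i)=\tfrac{1}{\sqrt2}\|\log R_i\|_F=\|x_i\|$, so the assumption $\sum_i d_R^2(I,R_i(0))<\pi^2$ is exactly $\sum_i\|x_i(0)\|^2<\pi^2$, i.e.\ $x(0)\in\calS_2(\pi^2)$ with $\pi^2<4\pi^2$. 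Lemma~\ref{l: invariant_set_control2} then gives three things at once: the trajectory never leaves $\calS_2(\pi^2)$ (so the logarithmic map stays nonsingular and \eqref{e:system_ab_compact_fili} is well posed), it converges to $\calC$ asymptotically, and---because $\tfrac12 x^\top x$ is nonincreasing---$\|x_i(t)\|\le r_0:=(\sum_i\|x_i(0)\|^2)^{1/2}<\pi$ for all $t$. On the compact set $\{\|x_i\|\le r_0\}$ the symmetric part $L^1_{x_i}$ is continuous and positive definite, so there is a uniform $c_1>0$ with $z^\top L_{x_i}z=z^\top L^1_{x_i}z\ge c_1\|z\|^2$ for all $z\in\R^3$ and all $i$.

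Next I would take the Lyapunov candidate $W(x)=\|\hat{B}^\top x\|_1=\sum_{(i,j)\in\calE}\|x_i-x_j\|_1$, which is convex (hence regular) and vanishes exactly on $\calC$. By the chain rule for convex functions its generalized gradient is $\partial W(x)=\hat{B}\,\partial\|\cdot\|_1(\hat{B}^\top x)$, and the subdifferential of $\|\cdot\|_1$ is precisely the set-valued map $\calF[\sign_c]$ that appears in $\calF_4$ from \eqref{e:system_ab_compact_fili_bigger}. Computing the set-valued Lie derivative along $\calF_4$, every $\nu\in\calF_4(x)$ has the form $\nu=-L_x\hat{B}h$ and every $\zeta\in\partial W(x)$ the form $\zeta=\hat{B}s$, with $h,s$ ranging over the same box of admissible signs. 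For $a\in\calL_{\calF_4}W(x)$ the identity $a=\zeta^\top\nu$ must hold for all such $\zeta$; choosing in particular $\zeta=\hat{B}h$ and using that the skew part of $L_x$ cancels in a quadratic form yields $a=-(\hat{B}h)^\top L^1_x(\hat{B}h)\le -c_1\|\hat{B}h\|^2$.

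The crucial point is then a uniform lower bound on $\|\hat{B}h\|$ away from $\calC$. Here I would argue by orthogonality of the cut and cycle spaces: $\hat{B}^\top x\in\im\hat{B}^\top=(\ker\hat{B})^\perp$, so if $\hat{B}h=0$, i.e.\ $h\in\ker\hat{B}$, then $(\hat{B}^\top x)^\top h=0$; but $h$ agrees in sign with $\hat{B}^\top x$ on every nonzero component, whence $(\hat{B}^\top x)^\top h=\|\hat{B}^\top x\|_1$, which is strictly positive for $x\notin\calC$---a contradiction. Therefore $\hat{B}h\neq\mathbf{0}$ for every admissible $h$ whenever $x\notin\calC$; since there are only finitely many sign/zero patterns and each admissible box is compact, one obtains $\|\hat{B}h\|^2\ge c_2$ for a graph-dependent constant $c_2>0$. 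Consequently $\max\calL_{\calF_4}W(x)\le -c_1 c_2<0$ on $\calS_2(\pi^2)\setminus\calC$, and Proposition~4 in \cite{Cortes2006}, combined with the asymptotic convergence already granted by Lemma~\ref{l: invariant_set_control2}, gives finite-time convergence of $x$ to $\calC$, i.e.\ finite-time attitude synchronization.

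I expect the main obstacle to be the bookkeeping of the generalized gradient of $\|\cdot\|_1$ at states where some edge components $x_{i_k}-x_{j_k}$ vanish: there the signum is set-valued, and one must verify both that the Lie-derivative estimate holds for the whole admissible box (not merely for one representative) and that the lower bound $c_2$ survives uniformly over all finitely many zero-patterns. Everything else---invariance, nonsingularity, asymptotic attractivity, and the ellipticity constant $c_1$---is inherited directly from Lemma~\ref{l: invariant_set_control2}, so the finite-time claim rests entirely on this uniform, pattern-independent gap, which is the technical step carried out in detail in \cite{Wei2017}.
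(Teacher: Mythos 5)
Your proof is correct, and there is in fact nothing in the paper to compare it against line by line: the paper only establishes Lemma~\ref{l: invariant_set_control2} and explicitly defers the finite-time part of this theorem to the extended version \cite{Wei2017}. Measured against the template the paper uses for Theorem~\ref{thm:main1}, your route is a genuine, self-contained alternative. You reuse the in-paper ingredients exactly as intended (strong invariance of $\calS_2(\pi^2)$, nonincrease of $\tfrac12 x^\top x$ giving $\|x_i(t)\|\le r_0<\pi$ and a uniform ellipticity constant $c_1$ for $L^1_{x_i}$, and the enlarged inclusion $\calF_4$ in \eqref{e:system_ab_compact_fili_bigger}), but where Theorem~\ref{thm:main1} pairs the weighted $\ell_2$ disagreement $\sqrt{x^\top\hat{L}x}$ with the spectral bound $L^\top L-\lambda_2 L\ge 0$, you pair the $\ell_1$ edge disagreement $W(x)=\|\hat{B}^\top x\|_1$ with the component-wise signum. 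The decisive observation is that $\partial W(x)=\hat{B}\,\partial\|\cdot\|_1(\hat{B}^\top x)$ is $\hat{B}$ applied to the \emph{same} sign box that generates $\calF_4$, so any $a\in\calL_{\calF_4}W(x)$, having to equal $\zeta^\top\nu$ for \emph{all} subgradients $\zeta$, can be evaluated at $\zeta=\hat{B}h$ when $\nu=-L_x\hat{B}h$, yielding $a\le -c_1\|\hat{B}h\|^2$; your cut-space orthogonality argument (if $h\in\ker\hat{B}$ then $0=(\hat{B}^\top x)^\top h=\|\hat{B}^\top x\|_1>0$, a contradiction off $\calC$) plus finiteness of sign patterns then gives the uniform gap $\|\hat{B}h\|^2\ge c_2>0$, replacing the $\lambda_2$ bound. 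This also settles the obstacle you flag: the estimate holds for the entire admissible box of every realizable zero pattern, not just a representative, so Proposition~4 of \cite{Cortes2006} applies. One small addition would round the argument off: to conclude synchronization is maintained after the hitting time $T\le W(x(0))/(c_1c_2)$, note that $\calL_{\calF_4}W\subset\R_{\le 0}$ everywhere (on $\calC$ it is contained in $\{0\}$ because $\mathbf{0}\in\partial W$ there), so $W$ is nonincreasing and the state stays in $\calC$; moreover, since $B^\top\one=\mathbf{0}$ implies $L_x\hat{B}h\perp\calC$ whenever $x\in\calC$, every Filippov velocity on $\calC$ that keeps the state in $\calC$ must vanish, so the common attitude reached is in fact static.
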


\section{Conclusion}\label{s:conclusion}

In this paper, we considered the finite-time attitude synchronization problem of multi-agent systems. Two finite-time consensus control protocols were proposed. The first protocol guaranteed global convergence in the sense that the initial rotation of each agent can be arbitrary in an open ball of radius $\pi$, which contains all but a set of measure zero of the rotations in $SO(3)$. In addition, we proposed a second protocol based on binary control, which achieves local convergence to the consensus subspace, in the sense that the initial rotations have to be close enough to the origin in $SO(3)$. For these two controllers, sufficient conditions were presented to guarantee finite-time convergence and boundedness of the solutions. Future studies include further investigation on the necessity of these conditions. Furthermore, the results in this paper based on absolute rotation measurements of the agents, hence finite-time synchronization protocols using relative measurements is another future topic.



\bibliographystyle{plain} 
\bibliography{ref,C:/Users/bartb/Documents/Literature/all}

\begin{thebibliography}{10}

\bibitem{Bacciotti1999}
{A. Bacciotti } and {F. Ceragioli}.
\newblock Stability and stabilization of discontinuous systems and nonsmooth
  {L}yapunov functions.
\newblock {\em ESAIM: Control, Optimisation and Calculus of Variations},
  4:361--376, 1999.

\bibitem{Athanasopoulos2014}
N.~Athanasopoulos, M.~Lazar, C.~B{\"{o}}hm, and F.~Allg{\"{o}}wer.
\newblock {On stability and stabilization of periodic discrete-time systems
  with an application to satellite attitude control}.
\newblock {\em Automatica}, 50(12):3190--3196, 2014.

\bibitem{Bhat00scl}
S.~Bhat and D.~Bernstein.
\newblock A topological obstruction to continuous global stabilization of
  rotational motion and the unwinding phenomenon.
\newblock {\em Systems \& Control Letters}, 39(1):63--70, 2000.

\bibitem{Bollobas98}
B.~Bollobas.
\newblock {\em Modern Graph Theory}, volume 184 of {\em Graduate Texts in
  Mathematics}.
\newblock Springer, New York, 1998.

\bibitem{Bower1964}
J.~Bower and G.~Podraza.
\newblock {Digital implementation of time-optimal attitude control}.
\newblock {\em IEEE Transactions on Automatic Control}, 9(4):590--591, 1964.

\bibitem{brockett83asymptotic}
R.~W. Brockett.
\newblock Asymptotic stability and feedback stabilization.
\newblock In {\em Differential Geometric Control Theory (R. W. Brockett, R. S.
  Millman and H. J. Sussmann, Eds)}, pages 181--191. Birkhauser, Boston, 1983.

\bibitem{Chen2011}
G.~Chen, F.~L. Lewis, and L.~Xie.
\newblock Finite-time distributed consensus via binary control protocols.
\newblock {\em Automatica}, 47(9):1962 -- 1968, 2011.

\bibitem{Clarke1990optimization}
F.~H. Clarke.
\newblock {\em Optimization and Nonsmooth Analysis}.
\newblock Classics in Applied Mathematics. Society for Industrial and Applied
  Mathematics, 1990.

\bibitem{Cortes2006}
J.~Cort\'{e}s.
\newblock Finite-time convergent gradient flows with applications to network
  consensus.
\newblock {\em Automatica}, 42(11):1993--2000, 2006.

\bibitem{cortes2008}
J.~Cort\'{e}s.
\newblock Discontinuous dynamical systems.
\newblock {\em Control Systems, IEEE}, 28(3):36--73, 2008.

\bibitem{dong2016attitude}
Y.~Dong and Y.~Ohta.
\newblock Attitude synchronization of rigid bodies via distributed control.
\newblock In {\em Proceedings of the 55th IEEE Conference on Decision and
  Control}, pages 3499--3504. IEEE, 2016.

\bibitem{Du2011}
H.~Du, S.~Li, and C.~Qian.
\newblock Finite-time attitude tracking control of spacecraft with application
  to attitude synchronization.
\newblock {\em IEEE Transactions on Automatic Control}, 56(11):2711--2717, Nov
  2011.

\bibitem{filippov1988}
A.F. Filippov and F.M. Arscott.
\newblock {\em Differential Equations with Discontinuous Righthand Sides:
  Control Systems}.
\newblock Mathematics and its Applications. Springer, 1988.

\bibitem{Hui2010}
Q.~Hui, W.~M. Haddad, and S.~P. Bhat.
\newblock Finite-time semistability, {F}ilippov systems, and consensus
  protocols for nonlinear dynamical networks with switching topologies.
\newblock {\em Nonlinear Analysis: Hybrid Systems}, 4(3):557--573, 2010.

\bibitem{Kowalik1970}
H.~Kowalik.
\newblock {A spin and attitude control system for the Isis-I and Isis-B
  satellites}.
\newblock {\em Automatica}, 6(5):673--682, sep 1970.

\bibitem{lee2012relative}
T.~Lee.
\newblock Relative attitude control of two spacecraft on {SO}(3) using
  line-of-sight observations.
\newblock In {\em Proceedings of IEEE American Control Conference}, pages
  167--172, 2012.

\bibitem{lee2015global}
T.~Lee.
\newblock Global exponential attitude tracking controls on {SO}(3).
\newblock {\em IEEE Transactions on Automatic Control}, 60(10):2837--2842,
  2015.

\bibitem{li2012decentralized}
J.~Li and K.~D. Kumar.
\newblock Decentralized fault-tolerant control for satellite attitude
  synchronization.
\newblock {\em IEEE Transactions on Fuzzy Systems}, 20(3):572--586, 2012.

\bibitem{LiuLam2016}
X.~Liu, J.~Lam, W.~Yu, and G.~Chen.
\newblock Finite-time consensus of multiagent systems with a switching
  protocol.
\newblock {\em IEEE Transactions on Neural Networks and Learning Systems},
  27(4):853--862, 2016.

\bibitem{ma2012invitation}
Y.~Ma, S.~Soatto, J.~Kosecka, and S.~Sastry.
\newblock {\em An Invitation to 3-D Vision: From Images To Geometric Models},
  volume~26.
\newblock Springer Science \& Business Media, 2012.

\bibitem{ZXLi}
R.~Murray, Z.~Li, and S.~Sastry.
\newblock {\em A Mathematical Introduction To Robotic Manipulation}.
\newblock CRC press, 1994.

\bibitem{paden1987}
B.~Paden and S.~Sastry.
\newblock A calculus for computing {F}ilippov's differential inclusion with
  application to the variable structure control of robot manipulators.
\newblock {\em IEEE Transactions on Circuits and Systems}, 34(1):73--82, 1987.

\bibitem{pereira2016common}
P.O. Pereira, D.~Boskos, and D.V. Dimarogonas.
\newblock A common framework for attitude synchronization of unit vectors in
  networks with switching topology.
\newblock In {\em Proceedings of the 55th IEEE Conference on Decision and
  Control}, pages 3530--3536, 2016.

\bibitem{pettersen1996position}
K.Y. Pettersen and O.~Egeland.
\newblock Position and attitude control of an underactuated autonomous
  underwater vehicle.
\newblock In {\em Proceedings of the 35th IEEE Conference on Decision and
  Control}, volume~1, pages 987--991. IEEE, 1996.

\bibitem{ren2010distributed}
W.~Ren.
\newblock Distributed cooperative attitude synchronization and tracking for
  multiple rigid bodies.
\newblock {\em IEEE Transactions on Control Systems Technology},
  18(2):383--392, 2010.

\bibitem{sarlette2009autonomous}
A.~Sarlette, R.~Sepulchre, and N.~E. Leonard.
\newblock Autonomous rigid body attitude synchronization.
\newblock {\em Automatica}, 45(2):572--577, 2009.

\bibitem{schaub}
H.~Schaub and J.~L. Junkins.
\newblock {\em Analytical Mechanics of Space Systems}.
\newblock {AIAA} Education Series, Reston, VA, 2003.

\bibitem{WJ15ac}
W.~Song, J.~Markdahl, X.~Hu, and Y.~Hong.
\newblock Distributed control for intrinsic reduced attitude formation with
  ring inter-agent graph.
\newblock In {\em Proceedings of the 54th IEEE Conference on Decision and
  Control}, pages 5599--5604, 2015.

\bibitem{Thunberg2016auto}
J.~Thunberg, J.~Goncalves, and X.~Hu.
\newblock Consensus and formation control on se (3) for switching topologies.
\newblock {\em Automatica}, 66:109--121, 2016.

\bibitem{Thunberg2014auto}
J.~Thunberg, W.~Song, E.~Montijano, Y.~Hong, and X.~Hu.
\newblock Distributed attitude synchronization control of multi-agent systems
  with switching topologies.
\newblock {\em Automatica}, 50(3):832 -- 840, 2014.

\bibitem{tron2012intrinsic}
R.~Tron, B.~Afsari, and R.~Vidal.
\newblock Intrinsic consensus on {SO}(3) with almost-global convergence.
\newblock In {\em Proceedings of the 51st IEEE Conference on Decision and
  Control}, pages 2052--2058, 2012.

\bibitem{Tsiotras1994}
P.~Tsiotras and J.~M. Longuski.
\newblock {Spin-axis stabilization of symmetric spacecraft with two control
  torques}.
\newblock {\em Systems {\&} Control Letters}, 23(6):395--402, 1994.

\bibitem{Wei2017}
J.~Wei, S.~Zhang, A.~Adaldo, X.~Hu, and K.~H. Johansson.
\newblock Finite-time attitude synchronization with a discontinuous protocol.
\newblock In {\em Proceedings of the 13th IEEE International Conference on
  Control and Automation}, 2017.

\bibitem{wu2013spacecraft}
T.~Wu, B.~Flewelling, F.~Leve, and T.~Lee.
\newblock Spacecraft relative attitude formation tracking on so (3) based on
  line-of-sight measurements.
\newblock In {\em Proceedings of IEEE American Control Conference}, pages
  4820--4825, 2013.

\bibitem{Zong2016}
Q.~Zong and S.~Shao.
\newblock Decentralized finite-time attitude synchronization for multiple rigid
  spacecraft via a novel disturbance observer.
\newblock {\em ISA Transactions}, 65:150 -- 163, 2016.

\end{thebibliography}

\end{document}